\documentclass[12pt]{amsart}
\usepackage{latexsym}
\usepackage{amsfonts}
\usepackage{amsthm}
\usepackage{tikz}
\usepackage{amsmath}
\usepackage{tkz-euclide}
\usepackage{bm}
\usepackage{bbm}
\usepackage{graphicx}
\usepackage{url}
\hoffset -0.3cm \voffset -2cm \textwidth 16cm \textheight 25cm

\thispagestyle{empty}

\newtheorem{thm}{Theorem}
 \newtheorem{cor}[thm]{Corollary}
 \newtheorem{lem}[thm]{Lemma}
 \newtheorem{prop}[thm]{Proposition}
\theoremstyle{definition}
 \newtheorem{defn}[thm]{Definition}
\newtheorem{rem}{Remark}
\newtheorem{ex}{Example}

 \DeclareMathOperator{\conv}{conv}
\title{Links in Edgewise Triangulations via Integer Partitions and their shellings}

\author[D. Joji\'{c}]{Du\v{s}ko Joji\'{c}}

\address[D. Joji\'{c}]{ Faculty of Science,
University of Banja Luka, Bosnia and Herzegovina,
dusko.jojic@pmf.unibl.org}

\author[O. Papaz]{Ognjen Papaz}

\address[O. Papaz]{Faculty of Philosophy,
University of East Sarajevo, Bosnia and Herzegovina,
 ognjen.papaz@ff.ues.rs.ba}

\subjclass{} \keywords{}

\begin{document}

\date{}

\begin{abstract}
We show that the combinatorial types of the links of the vertices
in the edgewise triangulation $T_{k,q}$ of a $(k-1)$-simplex are
encoded by the partitions of $k$. Each of these complexes is
isomorphic to a subcomplex of the barycentric subdivision of the
boundary of a $(k-1)$-simplex, and the containment relations among
them are described by a new poset on the set of partitions of $k$.
We compute the $h$-vectors of these complexes and determine the
number of vertices of $T_{k,q}$ whose links are the same
(correspond to the same partition).

 The combinatorial type of the
link of an $(s-1)$-dimensional face of $T_{k,q}$ corresponds to a
partition $(\lambda_1,\lambda_2,\ldots,\lambda_s)$ of $k$ into $s$
parts, together with additional partitions of each $\lambda_i$. We
also enumerate the combinatorial types of all $m$-dimensional
complexes that arise as the links in edgewise triangulations.

 A
new permutation statistic, \textit{the faithful initial part}, is
introduced and used to describe the star cluster of a facet of
$T_{k,q}$. By examining a specific shelling of this star cluster,
we prove that the $i$-th entry of its $h$-vector
 counts the number of permutations of $[k]$ with exactly $i$ descents,
 taking into account the
 faithful initial part as the multiplicity. Finally, we
 describe a concrete shelling order for $T_{k,q}$, give a
 combinatorial interpretation of its $h$-vector,
and derive an explicit formula for it.
\end{abstract}

 \maketitle \setcounter{section}{0}

\section{Preliminaries}
\subsection{Edgewise subdivision}
The edgewise subdivision of a simplex $\Delta$ with parameter $q$
is a specific triangulation that subdivides each $d$-dimensional
face of $\Delta$ into $q^d$ faces of the same dimension. This
construction first appeared in \cite{Fr} for $q=2$, and an
extension for all $q$ is given in \cite{Edel}. Edgewise
subdivision has numerous applications, including graphs and
hypergraphs coloring \cite{M-V}, discrete geometry \cite{Edel},
group actions on combinatorial structures  \cite{Berg}, or
algebraic combinatorics \cite{Br}. We briefly review the
definitions following the notation from \cite{M-V}. For $k,q\in
\mathbb{N}$, let
$$R_{k,q}=\left\{\mathbf{x}=(x_1,x_2,\ldots,x_{k-1})\in
\mathbb{R}^{k-1}:0\leq x_1\leq x_2\leq \cdots\leq x_{k-1}\leq
q\right\}$$ denote the $(k-1)$-simplex in $\mathbb{R}^{k-1}$. Its
vertices are
$$\mathbf{w}_i=(\underbrace{0,\ldots,0}_{k-i},
\underbrace{q,\ldots,q}_{i-1})\textrm{ for all }i\in[k],\textrm{
and } R_{k,q}=
\conv\left\{\mathbf{w}_1,\mathbf{w}_2,\ldots,\mathbf{w}_k\right\}.$$
\noindent The \textit{edgewise subdivision} of $R_{k,q}$ is the
simplicial complex $T_{k,q}$, whose vertex set is
$W_{k,q}=\{\mathbf{v}\in \mathbb{Z}^{k-1}: 0\leq v_1\leq v_2\leq
\cdots\leq v_{k-1}\leq q\}.$ The maximal cells or \textit{facets}
 of $T_{k,q}$ are determined by a vertex and
a consistent permutation. A permutation $\pi=\pi_1
\pi_2\ldots\pi_{k-1}\in
 \mathbb{S}_{k-1}$ is said to be \textit{consistent}
 with $\mathbf{v}\in W_{k,q-1}$ if, whenever $v_i = v_{i+1}$,
 the index $i$ appears before $i+1$ in $\pi$.
 Specifically, for $\mathbf{v} \in
W_{k,q-1}$ and $\pi \in \mathbb{S}_{k-1}$ consistent with
$\mathbf{v}$, the facet $F(\mathbf{v},\pi)$ of $T_{k,q}$ is
$F(\mathbf{v},\pi)=\conv\left\{\mathbf{v}^{(1)},
\mathbf{v}^{(2)},\ldots,\mathbf{v}^{(k)}\right\}$, where
\begin{equation}\label{E:vertices1}
\mathbf{v}^{(1)}=\mathbf{v},
 \mathbf{v}^{(2)}=\mathbf{v}+e_{\pi_{k-1}},
 \ldots\textrm{, }\mathbf{v}^{(k)}=\mathbf{v}+
 e_{\pi_{k-1}}+\cdots+e_{\pi_{1}}=\mathbf{v}+\mathbbm{1}.
\end{equation}

\noindent Informally, $\mathbf{v}$ is the first vertex of
$F(\mathbf{v},\pi)$, and the remaining $k-1$ vertices are obtained
by incrementing the coordinates of $\mathbf{v}$ one at a time,
 reading $\pi$ backwards.

  The number of facets of $T_{k,q}$ is $q^{k-1}$
   (see Lemma 6.2 in \cite{M-V}), and this result is obtained
geometrically. We describe a bijection between the facets of the
triangulation $T_{k,q}$ and the sequences from
$\mathcal{S}_{k,q}=\{0,1,\ldots,q-1\}^{k-1}$, which provides an
alternative notation for the facets used throughout the paper (a
similar notation appears in \cite{Thom}).
\begin{prop}\label{P:facetsarestrings} The number
of $(k-1) $-dimensional simplices in $T_{k,q}$ is $q^{k-1}$.
\end{prop}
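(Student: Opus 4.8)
The plan is to turn the informal remark preceding the statement into an explicit bijection $\Phi$ from the set of facets of $T_{k,q}$ onto the cube $\mathcal{S}_{k,q}=\{0,1,\dots,q-1\}^{k-1}$; since $|\mathcal{S}_{k,q}|=q^{k-1}$, this proves the claim. The first step is to fix a normal form for facets. By the description recalled above, every facet of $T_{k,q}$ has the form $F(\mathbf{v},\pi)$ for some $\mathbf{v}\in W_{k,q}$ and some $\pi\in\mathbb{S}_{k-1}$ consistent with $\mathbf{v}$; since the last vertex $\mathbf{v}^{(k)}=\mathbf{v}+\mathbbm{1}$ must again lie in $W_{k,q}$, we must have $v_{k-1}\le q-1$, so the admissible first vertices are precisely the weakly increasing words of $\mathcal{S}_{k,q}$. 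I would then check that this presentation is unique: inside $F(\mathbf{v},\pi)$ the $k$ vertices are totally ordered by the coordinatewise order (each $\mathbf{v}^{(j+1)}$ exceeds $\mathbf{v}^{(j)}$ by a single unit vector), so $\mathbf{v}$ is their unique minimum, and the consecutive differences along this chain read off the sequence $e_{\pi_{k-1}},e_{\pi_{k-2}},\dots,e_{\pi_1}$, which recovers $\pi$ (and shows en route that these increments are the $k-1$ distinct standard basis vectors, so $F(\mathbf{v},\pi)$ really is $(k-1)$-dimensional). Thus a facet is exactly the datum of an admissible pair $(\mathbf{v},\pi)$.

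Next I would define $\Phi\bigl(F(\mathbf{v},\pi)\bigr)=(v_{\pi_1},v_{\pi_2},\dots,v_{\pi_{k-1}})$, i.e.\ the word obtained by listing the entries of $\mathbf{v}$ in the order dictated by $\pi$; its entries lie in $\{0,\dots,q-1\}$, so $\Phi$ does map into $\mathcal{S}_{k,q}$. For the inverse, given $\mathbf{a}\in\mathcal{S}_{k,q}$ let $\mathbf{v}$ be the unique weakly increasing rearrangement of $\mathbf{a}$ and define $\pi$ by a stable-sorting rule: for each value $c$, if its occurrences in $\mathbf{a}$ sit at positions $i_1<\dots<i_{m_c}$ while the block of $c$'s in $\mathbf{v}$ occupies positions $b+1<\dots<b+m_c$, set $\pi_{i_j}=b+j$. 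Then one verifies that $\pi$ is consistent with $\mathbf{v}$ (within each value-block the labels $b+1,\dots,b+m_c$ appear in $\pi$ in increasing order), that $\Phi(F(\mathbf{v},\pi))=\mathbf{a}$, and that this $(\mathbf{v},\pi)$ is the only admissible pair with that property: $\mathbf{v}$ is forced to be the sorted word, and consistency forces the internal order of each value-block, which together with the requirement $v_{\pi_\ell}=a_\ell$ pins down $\pi$. Hence $\Phi$ is a bijection and $T_{k,q}$ has $|\mathcal{S}_{k,q}|=q^{k-1}$ facets.

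I expect the only slightly delicate point to be the verification in the previous paragraph: that ``weakly increasing rearrangement together with the stable-sorting permutation'' really is a two-sided inverse of $\Phi$, and in particular that the consistency condition makes the reconstruction of $\pi$ unique. As an independent check one can also count without a bijection: for fixed admissible $\mathbf{v}$ with value multiplicities $m_0,\dots,m_{q-1}$ the permutations consistent with $\mathbf{v}$ number $\binom{k-1}{m_0,\dots,m_{q-1}}$, and $\sum_{m_0+\dots+m_{q-1}=k-1}\binom{k-1}{m_0,\dots,m_{q-1}}=q^{k-1}$ by the multinomial theorem; but it is the explicit map $\Phi$, and the string notation it yields, that will be used throughout the rest of the paper.
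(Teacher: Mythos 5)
Your proposal is correct and follows essentially the same route as the paper: you use the very same encoding $F(\mathbf{v},\pi)\mapsto(v_{\pi_1},\ldots,v_{\pi_{k-1}})$ and invert it by sorting $\mathbf{a}$ weakly increasingly with a stable tie-breaking rule, which is exactly the paper's choice of $\alpha$ in (\ref{E:Order}) and $F(\mathbf{a})=F(\mathbf{v}_a,\alpha^{-1})$. The only differences are cosmetic: you spell out the uniqueness of the presentation $(\mathbf{v},\pi)$ of a facet (which the paper leaves implicit) and add an optional multinomial-theorem count as a sanity check.
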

\begin{proof}
 The facet $F=F(\mathbf{v},\pi)$ of $T_{k,q}$
 can be encoded as
$\overline{\mathbf{v}}_F=(v_{\pi_1},v_{\pi_2},\ldots,v_{\pi_{k-1}})
\in \mathcal{S}_{k,q}.$ We prove that the map
$F(\mathbf{v},\pi)\mapsto \overline{\mathbf{v}}_F$ is a bijection
by finding its inverse map. For
$\mathbf{a}=(a_1,a_2,\ldots,a_{k-1})\in \mathcal{S}_{k,q}$, we
choose the unique permutation
 $\alpha=\alpha_1\alpha_2\ldots\alpha_{k-1}\in \mathbb{S}_{k-1}$
that encodes the order of the coordinates of $\mathbf{a}$:
\begin{equation}\label{E:Order}
a_{\alpha_1}\leq a_{\alpha_2}\leq\cdots \leq a_{\alpha_{k-1}},
\textrm{ and } a_{\alpha_i}=
 a_{\alpha_{i+1}}\Rightarrow \alpha_i<\alpha_{i+1}.
\end{equation}
Observe that $\mathbf{v}_a=(a_{\alpha_1},a_{\alpha_2},\ldots,
a_{\alpha_{k-1}})\in W_{k,q-1}$, and (\ref{E:Order}) implies that
$\alpha^{-1}$ is consistent with $\mathbf{v}_a$. We then define $
F'=F(\mathbf{a})=F(\mathbf{v}_a,\alpha^{-1})$ as the facet
corresponding to $\mathbf{a}$, and obtain that
$\overline{\mathbf{v}}_{F'}=\mathbf{a}$.
\end{proof} If
$\alpha^{-1}=\bar{\alpha}_1\bar{\alpha}_2
\ldots\bar{\alpha}_{k-1}$, then the vertices of $F(\mathbf{a})$
are
\begin{equation}\label{E:vertices}
\mathbf{v}_a^{(1)}=\mathbf{v}_a\textrm{ and }
\mathbf{v}_a^{(i+1)}=\mathbf{v}_a^{(i)}
+e_{{\bar\alpha}_{k-i}}\textrm{, for all } i=1,2,\ldots,k-1.
\end{equation}
Thus, the vertices of $F(\mathbf{a})$ are obtained by incrementing
the coordinates of $\mathbf{v}_a$ one by one, in the reverse order
of their appearance in $\mathbf{a}$.

\subsection{Shellability, order complex and barycentric subdivision}

In this paper we explore the shellability of $T_{k,q}$ and some of
its subcomplexes. Here we provide a brief overview of the concept
of shellability. More details can be found in \cite{Bj-Wa1} and
Section~8 of \cite{Ziegler}. As a good general reference for the
basic definitions and notations about simplicial complexes we
recommend \cite{Matbook} or \cite{Mun}.

 A simplicial complex $K$ is \textit{shellable} if
$K$ is pure and there exists a linear ordering
(\textit{\textit{shelling order}}) $F_1, F_2,\ldots, F_t$ of
maximal faces (facets) of $K$ such that for all $i < j\leq t$,
there exist some $l < j$ and a vertex $v$ of $F_j$, such that
\begin{equation}\label{E:defshell}
F_i\cap F_j \subseteq F_l\cap F_j =F_j\setminus\{v\}.
\end{equation}

  For a fixed shelling order $F_1, F_2,\ldots, F_t$ of $K$,
  the \textit{restriction} $\mathcal{R}(F_j)$ of the facet $F_j$ is
  defined by
$\mathcal{R}(F_j) = \{v \in F_j : F_j \setminus \{v\}\subset
F_i\textrm{ for some }1 \leq i < j\}.$ \noindent Geometrically, if
we build up $K$ from its facets according to the shelling order,
then $\mathcal{R}(F_j)$ is the unique minimal new face added at
the $j$-th step, upon gluing $F_j$. The \textit{type} of the facet
$F_j$ in the given shelling order is the cardinality of
$\mathcal{R}(F_j)$, that is, $\mathrm{type}(F_j) = |\mathcal{R}(F_j) |$.

  For a $d$-dimensional simplicial complex $K$, we denote
the number of $i$-dimensional faces of $K$ by $f_i$, and call
$f(K) = (f_{-1},f_0, f_1,\ldots ,f_d)$ the $f$-\textit{vector} of
$K$. The $h$-\textit{vector} of $K$, denoted $h(K) = (h_0,
h_1,\ldots, h_d,  h_{d+1})$, is a linear transformation of its
$f$-vector defined by
$$h_s = \sum_{i=0}^{s} (-1)^{\,s-i} \binom{d+1-i}{\,d+1-s} f_{i-1}.$$

\noindent For a shellable simplicial complex $K$,
$h_s(K)=|\{F\textrm{ is a facet of }K: \mathrm{type}(F)=s\}|$ is an
important combinatorial interpretation of $h(K)$. This
interpretation of the $h$-vector was of great significance in the
proof of the upper-bound theorem and in the characterization of
$f$-vectors of simplicial polytopes (see chapter 8 in
\cite{Ziegler}).

 For a face $\sigma$ of a simplicial complex $K$,
the link and the star of $\sigma$ are subcomplexes of $K$ defined
as $\mathrm{link}_K(\sigma)=\left\{\tau\in K: \sigma\cup \tau \in K
\textrm{ and }\sigma\cap \tau = \emptyset \right\}$ and
$\mathrm{star}_K(\sigma)=\bigcup_{\sigma\subseteq \tau}\tau.$

\noindent Note that $\mathrm{star}_K(\sigma)$ is the \emph{join} of
$\sigma$ and $\mathrm{link}_K(\sigma)$, i.e.,
$\mathrm{star}_K(\sigma)=\sigma*\mathrm{link}_K(\sigma)$.

Now, we recall some notions about the posets and poset topology.
For the basic concepts about the posets and the poset terminology
we refer to Section 3 of \cite{Stanleybook}. The \textit{order
complex }$\Delta(P)$ of a poset P is the simplicial complex on the
vertex set $P$ whose faces are the chains in $P$. This operation
(introduced by Alexandrov) provides a bridge between combinatorics
and topology, allowing one to define topological (geometric)
properties of posets. For example, a poset $P$ is shellable if and
only if $\Delta(P)$ is shellable.

We let $\hat 0$ and $\hat 1$ denote the minimal and maximal
element of a graded poset $P$, and say that
$\Delta(\overline{P})=\Delta(P\setminus \{\hat 0,\hat 1\})$ is the
\textit{reduced order complex} of $P$.
 If $P_K$ is the face
poset of a simplicial complex (or a cell complex) $K$, the
\textit{barycentric subdivision} $Sd(K)$ of $K$ is defined as
$\Delta (P_K\setminus \{\hat{0}\})$. For example, the barycentric
subdivision of $(k-1)$-simplex is $Sd(\Delta^{k-1})=\Delta
(B_k\setminus \{\emptyset\})$ (here $B_k$ denotes the Boolean
lattice), while
$Sd(\partial\Delta^{k-1})=\Delta(\overline{B}_{k})$. Notation and
conventions for order complexes follow \cite{BjoMethods, Tools}.

\subsection{Labeling of a poset}
The method of labeling the edges of the Hasse diagrams of a
certain posets was introduced by Stanley in \cite{Stalex}. This
idea was further developed in \cite{Bjolex} and \cite{BjWa}. Here
we review a variant of $R$-labeling and the $EL$-labeling of a
poset. Let $E(P)$ denote the set of all covering relations in $P$,
i.e., $E(P)=\{(x,y)\in P:x\prec y\}$. In other words, $E(P)$ is
the set of the edges in the Hasse diagram of $P$. A function
$\lambda:E(P)\to\mathbb Z$ gives an \textit{edge-labeling} of $P$.
If $\mathfrak{c}:x=x_0\prec x_1\prec\cdots\prec x_k=y$ is a
maximal chain of the interval $[x,y]$, then we write
$\lambda(\mathfrak{c})=(\lambda(x_0,x_1),\lambda(x_1,x_2),\ldots,\lambda(x_{k-1},x_k))$.

An edge-labeling $\lambda$ of $P$ is called an
$R$-\textit{labeling} if in each closed interval $[x,y]$ of $P$
there exists a unique maximal (unrefinable) chain $x=x_0\prec
x_1\prec\cdots\prec x_n=y$ such that $\lambda(x_0,x_1)\leq
\lambda(x_1,x_2)\leq \cdots \leq \lambda(x_{n-1},x_n)$. Such a
chain is called \emph{rising}.
 An
$R$-labeling $\lambda$ of $P$ is called
\textit{edge-lexicographical labeling} (an $EL$-labeling, for
short) if, in addition,
 $\lambda(\mathfrak{c})$ lexicographically precedes $\lambda(\mathfrak{c}')$ for
 every other maximal chain $\mathfrak{c}'$  of
$[x, y]$.

If a poset $P$ admits an $EL$-labeling, then the lexicographic
order of its maximal chains is a shelling of $\Delta(P)$.
Moreover, the corresponding order on the maximal chains of
$\overline{P}$ provides a shelling of $\Delta(\overline{P})$; see
\cite{Bjolex} and \cite{Bj-Wa1}.

An $R$-labeling of a graded poset $P$ can
be used to derive important enumerative characteristics of $P$
(M\" obius function, Euler characteristic, etc). In this paper, we
use this technique to obtain a combinatorial interpretation of
$h$-vector of $\Delta(P)$. Let $\lambda$ be an $R$-labeling of $P$
and let $\mathfrak{C}$ denote the set of all maximal chains in $P$.
For each maximal chain $\mathfrak{c}:\hat 0\prec x_1\prec \cdots\prec
x_{k}=\hat 1$ (the facet of $\Delta(P)$), we consider its
\textit{label} $\lambda(\mathfrak c)=(\lambda(\hat 0,x_1),\lambda(x_{
1},x_2), \ldots,\lambda(x_{k},\hat 1))$. The \textit{descent set}
$D(\mathfrak c)$ and the \textit{number of descents} $\mathrm{des}(\mathfrak c)$ of $\mathfrak c$ are
defined as $D(\mathfrak c)=\big\{i\in [k-1]:\lambda(x_{i-1},x_i)>
\lambda(x_{i},x_{i+1})\big\}\textrm{ and }\mathrm{des}(\mathfrak c)=|D(\mathfrak c)|.$ Theorem
3.14.2 in \cite{Stanleybook} provides the following combinatorial
interpretation of the entries of $h(\Delta (\overline P))$:
\begin{equation}\label{E:hprekopadova}
h_m(\Delta(\overline P))=\big|\{\mathfrak c \in \mathfrak{C}:\mathrm{des}(\mathfrak c)=m\}\big|.
\end{equation}

\begin{ex}[The $h$-vector of
$Sd(\partial \Delta^{k-1})$]\label{Ex} There is a natural labeling
of the Boolean lattice $B_k$ defined by $\lambda(A\prec
B)=B\setminus A$. This labeling is an $R$-labeling, and moreover
it is an EL-labeling. The labels of all maximal chains are all
permutations from $\mathbb{S}_k$, and the \textit{lexicographic
order} $<_L$ on $\mathbb{S}_k$
\begin{equation}\label{E:lexshell}
\pi<_L \sigma\textrm{ if and only if
}\pi_1=\sigma_1,\pi_2=\sigma_2,\ldots ,
\pi_{j-1}=\sigma_{j-1}\textrm{ and } \pi_j<\sigma_j
\end{equation} will define a
shelling order (see Theorem 2.3 in \cite{Bjolex}) for $Sd(\partial
\Delta^{k-1})$. From (\ref{E:hprekopadova}) it follows that the
entries of $h$-vector of $Sd(\partial \Delta^{k-1})$ are the
well-known \textit{Eulerian numbers} $A(k,i)$. These numbers count
the number of permutations in $\mathbb{S}_k$ with exactly $i$
descents:
$$h_i( Sd(\partial\Delta^{k-1}))=
A(k,i)=\big|\{\pi\in \mathbb{S}_k: \mathrm{des}(\pi)=i\}\big|.$$ The
Eulerian numbers satisfy the following recursive relations:
\begin{equation}\label{E:Euler}
A(k,i)=(i+1)A(k-1,i)+(k-i)A(k-1,i-1).
\end{equation}
\end{ex}

\section{Partitions and products of chains}
Let $C_m=([m]\cup \{0\},\leq)$ denote the chain of length $m$, and
let $P_{\lambda_1,\lambda_2,\ldots,\lambda_s}$ denote the direct
product of $s$ chains of lengths
$\lambda_1,\lambda_2,\ldots,\lambda_s$. For example, the $k$-fold
product of $C_1$ is $P_{1,1,\ldots,1}=C_1^{\times k}\cong B_k$.
The following statement is well known, but we include the proof
here for the sake of completeness and for future applications.
\begin{prop}\label{P:proizvod je R}
The product of chains admits an $R$-labeling.
\end{prop}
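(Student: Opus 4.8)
The plan is to exhibit an explicit labelling that records, for each covering step, the coordinate in which the step takes place. Recall that an element of $P_{m_1,m_2,\ldots,m_s}$ is a tuple $\mathbf{a}=(a_1,a_2,\ldots,a_s)$ with $0\leq a_i\leq m_i$, and that $\mathbf{a}\prec \mathbf{b}$ is a covering relation precisely when $\mathbf{b}=\mathbf{a}+e_j$ for a unique $j\in[s]$. Define $\lambda(\mathbf{a},\mathbf{b})=j$ in that situation. Note also that $P_{m_1,m_2,\ldots,m_s}$ is graded, with rank function $\mathbf{a}\mapsto a_1+a_2+\cdots+a_s$, so every interval $[\mathbf{x},\mathbf{y}]$ has all its maximal chains of the common length $n=\sum_{j}(y_j-x_j)$; the case $\mathbf{x}=\mathbf{y}$ is trivial, so assume $n\geq 1$.

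Next I would set up a bijection between the maximal chains of a fixed interval $[\mathbf{x},\mathbf{y}]$ and words over $[s]$. A maximal chain $\mathbf{x}=\mathbf{z}^{(0)}\prec \mathbf{z}^{(1)}\prec\cdots\prec \mathbf{z}^{(n)}=\mathbf{y}$ is built by adding the unit vectors $e_{\lambda(\mathbf{z}^{(t-1)},\mathbf{z}^{(t)})}$ one at a time, hence it is completely determined by its label word $w=\lambda(\mathbf{z}^{(0)},\mathbf{z}^{(1)})\cdots\lambda(\mathbf{z}^{(n-1)},\mathbf{z}^{(n)})\in[s]^n$, and $w$ must contain the letter $j$ exactly $y_j-x_j$ times so that the coordinates add up correctly. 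Conversely, any word $w\in[s]^n$ with these letter multiplicities yields a chain by taking partial sums, and each intermediate tuple has $j$-th coordinate between $x_j$ and $y_j$, hence lies in $[\mathbf{x},\mathbf{y}]$. Thus the maximal chains of $[\mathbf{x},\mathbf{y}]$ correspond bijectively to the rearrangements of the multiset $\{1^{\,y_1-x_1},\ldots,s^{\,y_s-x_s}\}$, with the label sequence $\lambda(c)$ equal to the corresponding word.

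Under this correspondence a chain is rising (its labels weakly increasing) exactly when its word $w$ is weakly increasing, and there is precisely one weakly increasing word with the prescribed letter multiplicities, namely $1^{\,y_1-x_1}2^{\,y_2-x_2}\cdots s^{\,y_s-x_s}$ (the sorted multiset). Hence every interval $[\mathbf{x},\mathbf{y}]$ contains a unique rising maximal chain, which is exactly what it means for $\lambda$ to be an $R$-labelling. There is no serious obstacle here; the only points needing a moment's care are that the chain produced from a word really stays inside $[\mathbf{x},\mathbf{y}]$ and that all maximal chains of an interval share the length $\sum_j(y_j-x_j)$, i.e. that $P_{m_1,m_2,\ldots,m_s}$ is graded — both immediate from the product-of-chains structure.
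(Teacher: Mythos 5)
Your proposal is correct and uses essentially the same labelling as the paper: each covering relation is labelled by the coordinate in which the step occurs, and the unique rising chain in an interval is the one that increases the coordinates in order $1,2,\ldots,s$. Your bijection with multiset words merely spells out the uniqueness argument that the paper states by exhibiting the rising chain directly, so the two proofs coincide in substance.
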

\begin{proof} Assume that $P_{\lambda_1,\lambda_2,\ldots,\lambda_s}=
C_{\lambda_1}\times C_{\lambda_2}\times \cdots\times
C_{\lambda_s}$. For all $i=1,2,\ldots,s$, we label all of the
edges of $C_i$ with $i$, and transfer these labels to
$P_{\lambda_1,\lambda_2,\ldots,\lambda_s}$. For $
\mathbf{x}=(x_1,x_2,\ldots,x_s)$ and
$\mathbf{y}=(y_1,y_2,\ldots,y_s)$ such that $\mathbf{x} \prec_P
\mathbf{y}$, we let $\lambda(\mathbf{x}\prec_P \mathbf{y})=i$ if
and only if $x_i\prec_{C_i}y_i$ (the unique coordinate where
$\mathbf{x}$ and $\mathbf{y}$ are different). For any interval
$[\mathbf{x},\mathbf{y}]$ in
$P_{\lambda_1,\lambda_2,\ldots,\lambda_s}$ we have that
$[\mathbf{x},\mathbf{y}] \cong
[x_1,y_1]\times[x_2,y_2]\times\cdots \times [x_s,y_s].$ In our
labeling, the unique rising chain from $\mathbf{x}$ to
$\mathbf{y}$ in $P_{\lambda_1,\lambda_2,\ldots,\lambda_s}$ is
$\mathbf{x}=\mathbf{z}_0\prec_P \cdots\prec_P \mathbf{z}_1
 \prec_P\cdots\prec_P \mathbf{z}_i\prec_P \cdots
 \prec_P \mathbf{z}_s=\mathbf{y}$,
 where $\mathbf{z}_i=(y_1,\ldots,y_i,x_{i+1},\ldots, x_s)$.
In this chain, all the edges between $\mathbf{z}_{i-1}$
 and $\mathbf{z}_i$ are labeled by $i$.
\end{proof}
Since the order of the chains (the factors of the product)
defining $P_{\lambda_1,\lambda_2,\ldots,\lambda_s}$ does not
affect the combinatorial or topological
  properties of this poset, we may assume that $\lambda_1\geq
\lambda_2\geq\cdots\geq \lambda_s$ forms a partition of an
integer. We let $\mathrm{Par}(k)$ denote the set of all partitions of $k$
and let $p_k=|\mathrm{Par}(k)|$. Further, let $\mathrm{Par}(k,s)$ denote the set of
all partitions of $k$ into exactly $s$ parts (summands), and let
$p_{k,s}=|\mathrm{Par}(k,s)|$. A partition
$\bm\lambda=(\lambda_1,\lambda_2,\ldots,\lambda_s)$ of $k$ into
$s$ parts can also be encoded as
$(n^{m_1}_1,n^{m_2}_2,\ldots,n^{m_t}_t)$, where $m_i$ counts the
multiplicity of $n_i$: $m_1+m_2+\cdots+m_t=s\textrm{ and } n_1m_1+
n_2m_2+\cdots+n_tm_t=k$.
\begin{defn}For a partition
$\bm{\lambda}=(\lambda_1,\ldots,\lambda_s)$ of $k$, we
define $K_\lambda=\Delta(\overline{P}_\lambda)$, where
$P_{\lambda}=P_{\lambda_1,\lambda_2,\ldots,\lambda_s}=
C_{\lambda_1}\times C_{\lambda_2}\cdots\times C_{\lambda_s}.$
\end{defn}
By the preceding definition, the set of partitions of $k$ can be
regarded geometrically as the family of simplicial complexes
$\mathcal{C}_k=\{K_\lambda\}_{\lambda\in \mathrm{Par}(k)}$. For any
$\bm{\lambda}\in \mathrm{Par}(k)$ the complex $K_\lambda$ is
$(k-2)$-dimensional. For instance, $K_{(k)}\cong \Delta^{k-2}$,
while $K_{(1,1,\ldots,1)}\cong Sd(\partial \Delta^{k-1})$. The
numbers of vertices and facets of $K_\lambda$ are
\begin{equation}\label{E:facets}
f_0(K_\lambda)=(\lambda_1+1)
(\lambda_2+1)\cdots(\lambda_s+1)-2\textrm{ and }
f_{k-2}(K_\lambda)=\frac{k!}{\lambda_1!\lambda_2!\cdots\lambda_s!}.
\end{equation}

\noindent Recall that the labels of maximal chains of
$P_{\lambda}$ (as described in Proposition \ref{P:proizvod je R}),
correspond to $\mathbb{S}_{M_\lambda}$, the set of all
permutations of the multiset
$M_\lambda=\{1^{\lambda_1},2^{\lambda_2}, \ldots,s^{\lambda_s}\}$.
These permutations encode the facets of the simplicial complex
$K_\lambda$. Equivalently, the facets of $K_\lambda$ may be
identified with the permutations of $[k]$ in which the numbers
from each of $s$ sets $L_1=\{1,\ldots,\lambda_1\}$,
$L_2=\{\lambda_1+1,\ldots,\lambda_1+\lambda_2\}$, $\ldots,
L_s=\{k-\lambda_s+1,\ldots,k\}$ appear in their natural order.
Simply, the edges of each $C_i$ can be labeled by the elements
from $L_i$ and the elements of $P_\lambda$ can be identified with
the subsets of $[k]$. Consequently, each $K_\lambda\in
\mathcal{C}_k$ can be regarded as a subcomplex of $Sd(\partial
\Delta^{k-1})$. To describe the containment relation between two
complexes in $\mathcal{C}_k$, we introduce a coarsening of the
refinement order on $\mathrm{Par}(k)$ defined in \cite{Ziegler-part}. For
two partitions $\bm{\lambda} = (\lambda_1, \ldots, \lambda_s)$ and
$\bm{\mu} = (\mu_1, \ldots, \mu_t)$ of $k$, we write $\bm{\lambda}
<_c \bm{\mu}$ if each part of $\bm{\mu}$ is obtained by merging
consecutive parts of $\bm{\lambda}$.
\begin{cor}
For any two partitions $\bm\lambda, \bm\mu\in \mathrm{Par}(k)$, we have
$K_\mu \subset K_\lambda$ if and only if $\bm\lambda <_c\bm\mu$.
\end{cor}

To the best of our knowledge, this is a new order on
$\mathrm{Par}(k)$. We now describe some of the properties of
$P^{c}_{k}=(\mathrm{Par}(k),<_c)$. Both posets $(\mathrm{Par}(k),<)$ and
$P^{c}_{k}=(\mathrm{Par}(k),<_c)$ are graded of rank $k-1$, and the
cardinality of the $i$-th level in both is $p_{k,i}$. However,
there is significantly less covering in $P^{c}_{k}$.

 Ziegler showed in \cite{Ziegler-part} that $(\mathrm{Par}(k),<)$ is not Cohen-Macaulay and not shellable
 for $k\ge 19$, thus giving a negative answer to the question raised by Bj\"
 orner in \cite{Bjolex}.

\begin{thm} The poset $P^{c}_{k}$ is shellable and
Cohen-Macaulay for all $k\in \mathbb{N}$.
\end{thm}
\begin{proof}
There is a natural bijection between $P^{c}_{k}$ and a subposet of
the Boolean lattice $B_{k-1}$:
$$ \bm{\lambda} = (\lambda_1, \ldots, \lambda_s)
\longmapsto A_{\lambda}=[k]\setminus
\{\lambda_1,\lambda_1+\lambda_2,\ldots,\lambda_1+\lambda_2+\cdots+\lambda_s\}.$$
Then $\bm \lambda \prec_c \bm \mu$ if and only if
$A_{\lambda}\subset A_{\mu}$. More specifically, if
$\mu_i=\lambda_i+\lambda_{i+1}$, then
$A_{\mu}=A_{\lambda}\cup\{\lambda_1+\lambda_2+\cdots+
\lambda_{i}\}$. This defines an $EL$-labeling
$$\ell(\bm \lambda\prec_c \bm\mu)=A_{\mu}\setminus A_{\lambda}=\lambda_1+\lambda_2+\cdots+
\lambda_{i},$$ which is a restriction of the standard
lexicographic labeling of $B_{k-1}$.
\end{proof}
 Therefore, the reduced order complex $\Delta
(\overline{{P}^{c}_{k}})$ is a shellable $(k-3)$-dimensional disk
and its facets (given by the labels of maximal chains) correspond
to certain permutations in $\mathbb{S}_{k-1}$.

We use the shelling induced by the edge-labeling from
Proposition~\ref{P:proizvod je R} to compute the $h$-vector of
complexes in $\mathcal{C}_k$.

\begin{prop}\label{P:hrekrel}
Let $\bm{\lambda}=(\lambda_1, \dots, \lambda_s)\in \mathrm{Par}(k)$ and
$\bm{\lambda}'=(\lambda_1, \dots, \lambda_{s-1})\in
\mathrm{Par}(k-\lambda_s)$. Then
\begin{equation}\label{E:rrforh}
h_{i}(K_\lambda)= \sum_{j=0}^{\lambda_s} {k-\lambda_s-i+j\choose
j}{i+\lambda_s-j\choose \lambda_s-j}h_{i-j}(K_{\lambda'}).
\end{equation}
\end{prop}

\begin{proof}
Consider the multisets
$M_\lambda=\{1^{\lambda_1},\dots,s^{\lambda_s}\}$ and
$M_{\lambda'}=\{1^{\lambda_1},\dots,(s-1)^{\lambda_{s-1}}\}$. By
(\ref{E:hprekopadova}) and Proposition~\ref{P:proizvod je R},
$h_i(\Delta(K_\lambda))$ counts permutations of $M_\lambda$ with
exactly $i$ strict descents.

Any permutation of $M_\lambda$ is obtained by inserting
$\lambda_s$ copies of $s$ into a permutation of $M_{\lambda'}$. If
a permutation of $M_{\lambda'}$ has $i-j$ descents, there are
$k-\lambda_s-i+j$ positions to insert $j$ copies of $s$ between
non-descending pairs or at the beginning, and the remaining
$\lambda_s-j$ copies can go among the $i$ descent positions or at
the end. Summing over $j$ yields (\ref{E:rrforh}).
\end{proof}
\noindent Note that for $\lambda_s=1$ the formula (\ref{E:rrforh})
is essentially the same as (\ref{E:Euler}).
\begin{cor} For any
$\bm{\lambda}=(\lambda_1,\lambda_2,\ldots,\lambda_s)\in \mathrm{Par}(k)$ we
have that
$$h_{k-\lambda_1}(K_\lambda)={\lambda_1 \choose \lambda_2}
{\lambda_1\choose \lambda_3}\cdots{\lambda_1\choose \lambda_s},
\textrm{ and  } h_j(K_\lambda)=0\textrm{ for all }j>k-\lambda_1.$$
If $\bm\lambda=(\lambda_1,k-\lambda_1)$ has exactly two parts,
then $h_i(K_\lambda)={\lambda_1\choose i}{k-\lambda_1\choose i}$
for all $i$, $0\leq i\leq k-\lambda_1$.
\end{cor}

\begin{lem}\label{L:JI} If $\bm{\lambda}$ has more
than one part (i.e., $\bm\lambda\neq(k)$), then it is not possible
to find two non-empty complexes $M$ and $N$ such that
$K_\lambda\cong M*N$.
\end{lem}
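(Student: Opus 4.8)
The plan is to translate the statement into a connectivity property of a graph attached to $\overline{P}_\lambda$, and then establish that property by cutting the poset into rank levels. First I would record the following reduction. Suppose $K_\lambda = M*N$ with $M$ and $N$ each having at least one vertex. Then the vertex set of $\overline{P}_\lambda$ splits as a disjoint union $A\sqcup B$ of two non-empty sets, with $M$ and $N$ the induced subcomplexes on $A$ and $B$; since $K_\lambda$ is an order complex, $M=\Delta(\overline{P}_\lambda|_A)$ and $N=\Delta(\overline{P}_\lambda|_B)$. Because $\{a,b\}$ is a face of $M*N=K_\lambda$ for every $a\in A$, $b\in B$, any such pair is comparable in $\overline{P}_\lambda$. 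Conversely, if $\overline{P}_\lambda = A\sqcup B$ with every element of $A$ comparable to every element of $B$, then a chain of $A$ together with a chain of $B$ is again a chain (a finite subset of a poset all of whose pairs are comparable is totally ordered), and every chain of $\overline{P}_\lambda$ arises this way, so $K_\lambda=\Delta(\overline{P}_\lambda|_A)*\Delta(\overline{P}_\lambda|_B)$. Hence $K_\lambda$ is a (nontrivial) join \emph{if and only if} the incomparability graph $G_\lambda$ of $\overline{P}_\lambda$ — with vertex set $\overline{P}_\lambda$ and edges the incomparable pairs — is disconnected. So the lemma is equivalent to the claim that $G_\lambda$ is connected whenever $\bm\lambda$ has at least two parts.

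To prove connectedness of $G_\lambda$, I would slice the poset by rank. Writing an element of $P_\lambda=C_{\lambda_1}\times\cdots\times C_{\lambda_s}$ as $\mathbf{x}=(x_1,\dots,x_s)$ with $0\le x_i\le\lambda_i$, set $S_c=\{\mathbf{x}\in\overline{P}_\lambda : x_1+\cdots+x_s=c\}$ for $1\le c\le k-1$. These sets are non-empty (fill coordinates greedily until the sum reaches $c$) and partition $\overline{P}_\lambda$. Since $\mathbf{x}<\mathbf{y}$ forces $\sum x_i<\sum y_i$, any two distinct elements of the same $S_c$ are incomparable, so each $S_c$ spans a complete subgraph of $G_\lambda$. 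It therefore remains to produce, for each $c\in\{1,\dots,k-2\}$, a single incomparable pair $\mathbf{z}\in S_c$, $\mathbf{z}'\in S_{c+1}$.

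For this, order the parts $\lambda_1\ge\cdots\ge\lambda_s$, let $\mathbf{z}'\in S_{c+1}$ be obtained by filling coordinates $1,2,\dots$ to the maximum from the left until the sum $c+1$ is reached, and let $\mathbf{z}\in S_c$ be obtained symmetrically by filling from the right. If $c+1\le\lambda_1+\cdots+\lambda_{s-1}$, the left-fill never reaches the last coordinate, so $z'_s=0$, whereas $z_s=\min(\lambda_s,c)\ge1$; then $z_s>z'_s$ rules out $\mathbf{z}\le\mathbf{z}'$, while $\sum z_i<\sum z'_i$ rules out $\mathbf{z}\ge\mathbf{z}'$, so this is the desired incomparable pair. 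This handles all $c\le k-\lambda_s-1$. For the remaining values I would use the order-reversing automorphism $\mathbf{x}\mapsto\hat 1-\mathbf{x}$ of $\overline{P}_\lambda$, which is an automorphism of $G_\lambda$ sending $S_c$ to $S_{k-c}$: applying it to the edge already constructed between $S_{k-c-1}$ and $S_{k-c}$ yields an edge between $S_c$ and $S_{c+1}$ as soon as $k-c-1\le k-\lambda_s-1$, i.e. $c\ge\lambda_s$. Since $s\ge2$ and $\lambda_1\ge\lambda_s$ give $k\ge2\lambda_s$, the ranges $c\le k-\lambda_s-1$ and $c\ge\lambda_s$ together exhaust $\{1,\dots,k-2\}$. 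Thus every consecutive pair of slices is linked, $G_\lambda$ is connected, and the lemma follows; the base case $k=2$, i.e. $\bm\lambda=(1,1)$, is a single slice and is trivial.

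The conceptually decisive step is the reduction in the first paragraph (it is what makes the whole problem a combinatorial statement about the poset), but it is essentially formal. The step I expect to require genuine care is the last one: the naive "fill from both ends" construction only links the lower half of the rank levels, and one must both notice that the self-duality of $\overline{P}_\lambda$ carries those links to the upper half and check that $k\ge2\lambda_s$ — a consequence of having at least two parts — makes the two halves overlap, so that no consecutive pair of levels is missed. This is precisely the place where the hypothesis $\bm\lambda\neq(k)$ is used: when $\bm\lambda=(k)$ the poset $\overline{P}_\lambda$ is a chain, $G_\lambda$ has no edges at all, and $K_{(k)}\cong\Delta^{k-2}$ is indeed a join.
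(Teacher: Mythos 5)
Your proof is correct and follows essentially the same route as the paper: decompose the vertices of $\overline{P}_\lambda$ into rank levels, note that each level is an antichain (so a clique in the incomparability graph / an independent set in $K_\lambda$), and show consecutive levels cannot be split between the two join factors. The only difference is that where the paper simply asserts that the induced bipartite graph between consecutive levels is not complete, you verify it explicitly via the left-fill/right-fill construction together with the self-duality $\mathbf{x}\mapsto\hat 1-\mathbf{x}$ and the inequality $k\ge 2\lambda_s$, which is a welcome filling-in of that detail.
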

We say that for $\bm\lambda\neq(k)$, the complex $K_\lambda$ is
\textit{join-irreducible}.
\begin{proof}
The vertices of $K_\lambda$ can be written as the union of
independent sets $A_i=\{\mathbf{x}\in
\overline{P}_\lambda:x_1+x_2+\cdots+x_k=i\}$ (the levels in
$P_\lambda$). If $K_\lambda\cong M*N$, then we have that
$A_i\subset M$ or $A_i\subset N$. Moreover, since the subcomplex
of $K_\lambda$ induced
 by $A_i\cup A_{i+1}$ is not a complete bipartite
graph, it follows that all the sets $A_i$ must be contained within
the same complex (either $M$ or $N$), while the other complex is
empty.
\end{proof}

\begin{thm}\label{T:razparrazkom}
The family $\mathcal{C}_k=\{K_\lambda\}_{\lambda\in
\mathrm{Par}(k)}$ consists of $p_k-1$ pairwise nonisomorphic
shellable $(k-2)$-discs and a single $(k-2)$-sphere
$K_{(1,1,\ldots,1)}\cong Sd(\partial \Delta^{k-1})$.
\end{thm}
\begin{proof}
We proceed by induction on $k$ and Lemma \ref{L:JI} to prove that
 different partitions of $k$ define non-isomorphic complexes.
 For a partition $\bm\lambda=(\lambda_1,\lambda_2,\ldots,\lambda_t)\in
\mathrm{Par}(k)$, the link of a vertex
$\mathbf{a}=(a_1,\ldots,a_t)$ in $K_\lambda$ is
$\Delta\left(\overline{[\hat 0,\mathbf{a}]_{P_\lambda}}\right)*
\Delta\left(\overline{[\mathbf{a},\hat 1]_{P_\lambda}}\right)$, and one of
these complexes may be empty.

For all $i=1,2,\dots,t$, let $\bm\lambda^{(i)}$ denote the
partition obtained from $\bm \lambda$ by decreasing its $i$-th
part $\lambda_i$ by one. The links of atoms in $P_\lambda$ are
precisely the complexes $K_1, K_2, \ldots, K_s$, where
$K_i=\Delta(\overline{P}_{\lambda^{(i)}})$.
 If $K_\lambda\cong K_\mu$, then the links of their corresponding
vertices are isomorphic, so $K_1, K_2, \ldots, K_s$ must also
occur as the links of vertices in $K_\mu$. By the inductive
hypothesis, this forces $\bm\lambda=\bm\mu$.
\end{proof}


\section{The star and the link of a vertex in $T_{k,q}$}

In this section, we describe the link and the star of a vertex
$\mathbf{v}\in W_{k,q}$. Note that two vertices
$\mathbf{x},\mathbf{y}\in W_{k,q}$ define an edge
$\mathbf{x}\mathbf{y}$ of $T_{k,q}$ if and only if
\begin{equation}\label{E:istasrana}
\mathbf{y}-\mathbf{x}=(y_1-x_1,\ldots,y_{k-1}-x_{k-1})
\in\big\{0,1\big\}^{k-1}\text{ or }
\mathbf{y}-\mathbf{x}\in\{-1,0\}^{k-1}.
\end{equation}
\begin{thm}\label{T:linkunutr} If $\mathbf{v}\in W_{k,q}$
is an interior vertex of $R_{k,q}$, then
$\mathrm{link}_{T_{k,q}}(\mathbf{v})\cong Sd
\big(\partial\Delta^{k-1}\big).$
\end{thm}
\begin{proof}
A vertex $\mathbf{v}=(v_1,\ldots,v_{k-1})$ is contained in the
interior of $R_{k,q}$ if and only if $0< v_1<v_2<\cdots<
v_{k-1}<q$. From (\ref{E:istasrana}), we conclude that
$\mathrm{link}_{T_{k,q}} (\mathbf{v})$ contains $2^k-2$ vertices of the
form
$$\mathbf{v}\pm \sum_{i\in S\subset [k-1]}e_i, \textrm{ for all }
S\neq \emptyset.$$

\noindent Recall that the vertices of $Sd
\big(\partial\Delta^{k-1}\big)$ are all subsets $A\subset [k]$
such that $A \neq \emptyset$ and $ A\neq [k]$. The facets of $Sd
\big(\partial\Delta^{k-1}\big)$ are encoded by all permutations in
$\mathbb{S}_k$, and the facet indexed by $\pi =\pi_1
\pi_2\ldots\pi_k$ contains the $k-1$ vertices $\big\{\pi_1\big\}$,
$\big\{\pi_1,\pi_2\big\}$,$\ldots,\big\{\pi_1,\pi_2,
\ldots,\pi_{k-1}\big\}$. The following map sends the vertices of
$Sd (\partial\Delta^{k-1})$ to the vertices of
$\mathrm{link}_{T_{k,q}}(\mathbf{v})$:
$$\textrm{ for }A\subset
[k], A \neq \emptyset\textrm{
and }A\neq [k],\textrm{ we let }\mathbf{v}_A=\left\{%
\begin{array}{ll}
    \mathbf{v}+\sum_{i\in A} e_i, & \hbox{if $k\notin A$;} \\
    \mathbf{v}-\sum_{i\in A^c} e_i, & \hbox{if $k\in A$.} \\
\end{array}%
\right.    $$  It is easy to conclude that the map $A \mapsto
\mathbf{v}_A$ is a bijection. We also define a bijection between
the facets of $Sd \big(\partial \Delta^{k-1}\big)$ and the facets
of
$\mathrm{link}_{T_{k,q}} (\mathbf{v})$:\\
 a facet of $Sd \big(\partial \Delta^{k-1}\big)$ defined by $\pi
=\pi_1\ldots \pi_{i-1}\,\, k\,\,\pi_{i+1}\ldots\pi_k\in
\mathbb{S}_{k}$ corresponds to the facet
$F(\mathbf{v}_\pi,\pi')\setminus \{\mathbf{v}\}$ of
$\mathrm{link}_{T_{k,q}}(\mathbf{v})$, where
\begin{equation}\label{E:stranaulinku}
\mathbf{v}_\pi=\mathbf{v}-e_{\pi_k}-\cdots-e_{\pi_{i+1}} \textrm{
and }\pi'= \pi_{i-1}\ldots\pi_1 \, \pi_{k} \ldots\pi_{i+1} \in
\mathbb{S}_{k-1}.
\end{equation}
\noindent The vertices of $F(\mathbf{v}_\pi,\pi')$ are
 $\mathbf{v}_\pi=\mathbf{v}-e_{\pi_k}-\cdots-e_{\pi_{i+1}}, \ldots,
 \mathbf{v}-e_{\pi_{k}},$ $
 \mathbf{v},\mathbf{v}+e_{\pi_{1}},\dots,
 \mathbf{v}+e_{\pi_{i-1}}+\cdots+e_{\pi_1},$
and the above defined bijections preserve the vertex-facet
incidence.
\end{proof}
\begin{rem}\label{R:star} Let $\mathbf{v}\in W_{k,q}$ be a vertex
lying in the interior of $R_{k,q}$. For a permutation
$\pi=\pi_1\ldots\pi_{i-1}k\,\pi_{i+1}\ldots\pi_k \in
\mathbb{S}_{k}$, we define
$\mathbf{a}_\pi=(v_{\pi_{i-1}},\ldots,v_{\pi_1},
v_{\pi_{k}}-1,\ldots,v_{\pi_{i+1}}-1) \in \mathcal{S}_{k,q}.$ If
we use the notation for the facets of $T_{k,q}$ defined in
Proposition \ref{P:facetsarestrings}, then it follows from
(\ref{E:stranaulinku}) that $\mathrm{star}_{T_{k,q}}(\mathbf{v})=
\bigcup_{\pi\in \mathbb{S}_k} F(\mathbf{a}_\pi).$ Therefore, the
facets of $\mathrm{star}_{T_{k,q}}(\mathbf{v})$ correspond to permutations
from $\mathbb{S}_k$. Two facets $F(\mathbf{a}_\pi)$ and
$F(\mathbf{a}_\sigma)$ share a common $(k-2)$-face if and only if
$\pi$ is covered by $\sigma$ (or vice versa) in the weak Bruhat
order on $\mathbb{S}_k$.
\end{rem}

A vertex $\mathbf{v}\in W_{k,q}$ is on the boundary of $R_{k,q}$
if some of its coordinates repeat, or if some of them are equal
$0$ or $q$. We encode the occurrences of $0$ and $q$, as well as
the multiplicities of the coordinates of $\mathbf{v}$, using its
\textit{type}:
$$\mathrm{type}(\mathbf{v})=\big(\alpha_0;\alpha_1,\ldots,
\alpha_{s-1};\alpha_s\big) \textrm{; }\alpha_0,\alpha_s\in
\mathbb{N}_0, \alpha_i\in \mathbb{N}\textrm{ for all }
i\in[s-1],$$

where the first $\alpha_0$ coordinates of $\mathbf{v}$ equal $0$,
for each $i=1,\dots,s-1$ the next $\alpha_i$ coordinates are
equal, and the last $\alpha_s$ coordinates equal $q$.

 We now relate the links of
the vertices of $T_{k,q}$ to posets that are products of chains.
\begin{thm}\label{T:linkjeproduct}Let $\mathbf{v}$ be a vertex of $T_{k,q}$
with $\mathrm{type}(\mathbf{v})=
(\alpha_{0};\alpha_1,\ldots,\alpha_{s-1};\alpha_s)$. If
$P_\mathbf{v}$ is the product of $s$ chains of lengths
$\alpha_0+\alpha_s+1$,$\alpha_1,\ldots,\alpha_{s-1}$,
 then $\mathrm{link}_{T_{k,q}}(\mathbf{v})\cong
\Delta(\overline{P}_\mathbf{v}).$
\end{thm}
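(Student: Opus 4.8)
The plan is to mimic, in greater generality, the explicit vertex-- and facet--correspondences used to prove Theorem~\ref{T:linkunutr}. Write $type(\mathbf v)=(\alpha_0;\alpha_1,\ldots,\alpha_{s-1};\alpha_s)$ and partition the coordinate set $[k-1]$ into consecutive blocks $B_0=\{1,\ldots,\alpha_0\}$ (the coordinates of $\mathbf v$ equal to $0$), interior blocks $B_1,\ldots,B_{s-1}$ (the maximal runs on which $\mathbf v$ takes its successive distinct values in $(0,q)$), and $B_s$ (the last $\alpha_s$ coordinates, equal to $q$). Since $\alpha_0+\alpha_1+\cdots+\alpha_{s-1}+\alpha_s=k-1$, the tuple $\lambda=(\alpha_0+\alpha_s+1,\alpha_1,\ldots,\alpha_{s-1})$ is, after reordering, a partition of $k$ into $s$ parts, and $P_{\mathbf v}$ is exactly $P_\lambda$; so the assertion is that $link_{T_{k,q}}(\mathbf v)\cong K_\lambda$, which specializes to Theorem~\ref{T:linkunutr} when $\lambda=(1,1,\ldots,1)$.

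I would first identify the vertices of the link. By \eqref{E:istasrana} these are the points $\mathbf v+\mathbbm{1}_S$ with $\emptyset\ne S\subseteq[k-1]$ and $\mathbf v-\mathbbm{1}_T$ with $\emptyset\ne T\subseteq[k-1]$ that still lie in $W_{k,q}$; monotonicity of the resulting tuple forces $S$ to be a union of \emph{suffixes} of the blocks $B_0,\ldots,B_{s-1}$ with $S\cap B_s=\emptyset$, and $T$ to be a union of \emph{prefixes} of $B_1,\ldots,B_s$ with $T\cap B_0=\emptyset$ (a coordinate equal to $0$ cannot be lowered, one equal to $q$ cannot be raised, and inside a constant run only an initial segment may be lowered or a terminal segment raised). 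A direct count gives $(\alpha_0+\alpha_s+2)\prod_{i=1}^{s-1}(\alpha_i+1)-2$ such vertices, which is precisely $f_0(\Delta(\overline P_\lambda))$. Next I would define $\Phi$ from $\overline P_\lambda$ to the set of link vertices as follows. Writing an element of $P_\lambda$ as $(m,x_1,\ldots,x_{s-1})$ with $0\le m\le\alpha_0+\alpha_s+1$ and $0\le x_i\le\alpha_i$, put $\Phi(m,x_1,\ldots,x_{s-1})=\mathbf v+\mathbbm{1}_S$ if $m\le\alpha_0$, where $S$ consists of the last $m$ coordinates of $B_0$ and the last $x_i$ coordinates of each $B_i$; and $\Phi(m,x_1,\ldots,x_{s-1})=\mathbf v-\mathbbm{1}_T$ if $m\ge\alpha_0+1$, where $T$ consists of the first $\alpha_0+\alpha_s+1-m$ coordinates of $B_s$ and the first $\alpha_i-x_i$ coordinates of each $B_i$. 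One checks that $\hat0$ and $\hat1$ are the only elements sent to $\mathbf v$, that every other element of $P_\lambda$ lands on an actual link vertex, and that (by the count just made) $\Phi$ restricts to a bijection of $\overline P_\lambda$ onto the vertices of the link, with an evident inverse recording the sign of a link vertex and the lengths of the suffixes resp.\ prefixes it cuts out on the blocks.

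Then I would verify that $\Phi$ carries chains to faces. Using the standard description of the faces of the edgewise subdivision --- a set of vertices of $T_{k,q}$ is a face iff it is totally ordered by $\mathbf a\preceq\mathbf b\iff\mathbf b-\mathbf a\in\{0,1\}^{k-1}$, and for such tuples this is equivalent to pairwise $\preceq$-comparability --- a collection $\{\mathbf y_1,\ldots,\mathbf y_r\}$ of link vertices is a face of $link_{T_{k,q}}(\mathbf v)$ iff $\{\mathbf v,\mathbf y_1,\ldots,\mathbf y_r\}$ is pairwise $\preceq$-comparable. For two ``$+$''-vertices $\mathbf v+\mathbbm{1}_S$ this amounts to nesting of their sets $S$, i.e.\ to comparability of their $\Phi$-preimages in $P_\lambda$; likewise for two ``$-$''-vertices via nesting of the sets $T$. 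For a ``$+$''-vertex $\mathbf v+\mathbbm{1}_S$ (preimage $(m,\vec x)$ with $m\le\alpha_0$) and a ``$-$''-vertex $\mathbf v-\mathbbm{1}_T$ (preimage $(m',\vec x')$ with $m'\ge\alpha_0+1$), the only possible relation $\mathbf v-\mathbbm{1}_T\preceq\mathbf v+\mathbbm{1}_S$ holds iff $S\cap T=\emptyset$, which block-by-block reduces to $x_i\le x_i'$ for $1\le i\le s-1$; since $m<m'$ automatically, this is exactly $(m,\vec x)\le(m',\vec x')$ in $P_\lambda$. Hence a family of link vertices spans a face precisely when its $\Phi$-preimage is a chain of $\overline P_\lambda$, so $\Phi$ induces a simplicial isomorphism $K_\lambda=\Delta(\overline P_{\mathbf v})\xrightarrow{\ \sim\ }link_{T_{k,q}}(\mathbf v)$.

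The step I expect to be the main obstacle is the last one: tracking how $\Phi$ ``folds'' the chain $C_{\alpha_0+\alpha_s+1}$, so that its lower part $0\le m\le\alpha_0$ encodes the ``$+$'' side of $\mathbf v$ and its upper part $\alpha_0+1\le m\le\alpha_0+\alpha_s+1$ the ``$-$'' side, together with the numerical core of the argument --- that a terminal segment of length $x_i$ and an initial segment of length $\alpha_i-x_i'$ of the $\alpha_i$-element block $B_i$ are disjoint exactly when $x_i\le x_i'$. One should also dispatch the degenerate cases $\alpha_0=0$, $\alpha_s=0$, and $s=1$ (where $\lambda=(k)$, $P_{\mathbf v}=C_k$, and $link_{T_{k,q}}(\mathbf v)\cong\Delta^{k-2}$); and, if a self-contained treatment is preferred, the ``face $\iff$ $\preceq$-chain'' fact can be replaced by an explicit construction of the facet of $T_{k,q}$ through $\mathbf v$ attached to each maximal chain of $P_\lambda$, which when $\mathbf v$ has a coordinate equal to $0$ or $q$ reduces to an edgewise subdivision of a lower-dimensional face and so is accessible by induction on $k$.
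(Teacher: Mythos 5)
Your proof is correct, but it takes a genuinely different route from the paper. The paper argues through facets: it starts from the permutation-to-facet map of Theorem~\ref{T:linkunutr} (formula (\ref{E:stranaulinku})), works out which $\pi\in\mathbb{S}_k$ still give a vertex $\mathbf{v}_\pi\in W_{k,q}$ and a consistent $\pi'$ when $\mathbf{v}$ has repeated, zero or $q$ coordinates (conditions (1)--(3)), and then recognizes these surviving permutations as the maximal chains of $P_{\mathbf{v}}$ via an explicit edge-labelling of the Hasse diagram, recording the vertex correspondence along the way. You instead argue through vertices and all faces at once: an explicit bijection $\Phi$ from $\overline{P}_{\mathbf v}$ to the link vertices (folding the chain $C_{\alpha_0+\alpha_s+1}$ into the ``$+$'' and ``$-$'' sides of $\mathbf v$), followed by the check that comparability in $P_{\mathbf v}$ matches the edge condition (\ref{E:istasrana}) block by block --- your suffix/prefix disjointness computation $x_i\le x_i'$ is right, as is the vertex count. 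What each approach buys: the paper's version simultaneously produces the explicit facet parametrization $\mathbb{S}_{\mathbf v}$ that is reused later (Remark~\ref{R:Starv} and the star-cluster sections), while yours gives a cleaner face-level isomorphism with no consistency bookkeeping --- but it leans on the fact that a set of vertices of $T_{k,q}$ is a face if and only if it is pairwise $\preceq$-comparable. That chain/flag description is true and standard (it is essentially the usual definition of the edgewise subdivision), but the paper only states the pairwise edge criterion (\ref{E:istasrana}) and defines $T_{k,q}$ by its facets $F(\mathbf v,\pi)$, so within this paper's framework you would need the short supplementary argument that every $\preceq$-bounded chain in $W_{k,q}$ extends to a facet with a consistent permutation (e.g.\ starting from $w_i=\min((\mathbf y_1)_i,q-1)$ and adding unit vectors from the largest index downward), or the inductive fallback you sketch at the end.
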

\begin{proof}
Note that the bijection $\pi \mapsto F(v_\pi,\pi')$ between
$\mathbb{S}_k$ and the facets of $\mathrm{link}_{T_{k,q}}(\mathbf{v})$
described in (\ref{E:stranaulinku}) (see the proof of Theorem
\ref{T:linkunutr}),
 does not hold for all $\pi\in \mathbb{S}_k$.
The $(k-1)$-tuple $
\mathbf{v}_\pi=\mathbf{v}-e_{\pi_k}-\cdots-e_{\pi_{i+1}}$ defined
by $\pi=\pi_1\ldots \pi_{i-1}\, k\, \pi_{i+1}\ldots\pi_k\in
\mathbb{S}_k$ is a vertex of $T_{k,q}$, and
$\pi'=\pi_{i-1}\ldots\pi_1 \pi_{k}\ldots\pi_{i+1}\in
\mathbb{S}_{k-1} $ is consistent with $\mathbf{v}_\pi$ if and only
if $\pi$ satisfies the following conditions:
\begin{enumerate}
\item[(1)] If $\alpha_0>0$, the numbers $1, \dots, \alpha_0$ must
precede $k$ in $\pi$ (since the corresponding coordinates of
$\mathbf{v}$ cannot be decreased), and appear in reverse order.
\item[(2)] For $i=1,\dots,s-1$, the $\alpha_i$ consecutive
coordinates of $\mathbf{v}$ are equal, and indices of these
coordinates in $\pi$ must appear in reverse order. There are no
restrictions on the position of $k$. \item[(3)] If $\alpha_s>0$,
the numbers $k-\alpha_s, \dots, k-1$ must follow $k$ in $\pi$,
also in reverse order.
\end{enumerate}
\begin{figure}[h]
\begin{center}

\begin{tikzpicture}[scale=1.16]
\draw (0,0) -- (4,4) -- (2,6) -- (-2,2) -- (0,0);

\draw (-1,1) -- (3,5);\draw (1,1) -- (-1,3);\draw (2,2) -- (0,4);
\draw (3,3) -- (1,5);

\draw (0,1) -- (4,5) -- (2,7) -- (-2,3)--(0,1);\draw (-1,2) --
(3,6);\draw (1,2) -- (-1,4);\draw (2,3) -- (0,5);\draw (3,4) --
(1,6);

\draw (0,0)--(0,1);\draw (1,1)--(1,2);\draw (2,2)--(2,3);\draw
(3,3)--(3,4);\draw (4,4)--(4,5);

\draw (-1,1)--(-1,2);\draw (0,2)--(0,3);\draw (1,3)--(1,4);\draw
(2,4)--(2,5);\draw (3,5)--(3,6);

\draw (-2,2)--(-2,3);\draw (-1,3)--(-1,4);\draw (0,4)--(0,5);\draw
(1,5)--(1,6);\draw (2,6)--(2,7);

\draw[thick](0,0)--(0,1);\draw[thick](0,0)--(4,4);
\draw[thick](0,0)--(-2,2);

\draw[thick](-4.8,0)--(-4.8,5.6);\draw[thick](-4,0)--(-4,2.8);
\draw[thick](-3.2,0)--(-3.2,1.4);

\draw [fill] (0,0) circle [radius=0.051]; \foreach \y in
{0,...,4}{\draw [fill] (-4.8,1.4*\y) circle [radius=0.051];}
\foreach \y in {0,...,2}{\draw [fill] (-4,1.4*\y) circle
[radius=0.051];} \foreach \y in {0,...,1}{\draw [fill]
(-3.2,1.4*\y) circle [radius=0.051];}

\foreach \x in {0,...,4}{\draw [fill] (\x,\x) circle
[radius=0.051];}

\foreach \x in {0,...,4}{\draw [fill] (\x-1,\x+1) circle
[radius=0.051];} \foreach \x in {0,...,4}{\draw [fill] (\x-2,\x+2)
circle [radius=0.051];}

\foreach \x in {0,...,4}{\draw [fill] (\x,1+\x) circle
[radius=0.051];}

\foreach \x in {0,...,4}{\draw [fill] (\x-1,\x+2) circle
[radius=0.051];} \foreach \x in {0,...,4}{\draw [fill] (\x-2,\x+3)
circle [radius=0.051];}

\node at (-5,.7) {$2$};\node at (-5,2.2) {$1$};\node at (-5,3.6)
{$7$};\node at (-5,5) {$6$};\node at (-4.2,.7) {$4$};\node at
(-4.2,2.2) {$3$};\node at (-3,.7) {$5$}; \node at (-5,-.3)
{$C_4$};\node at (-4.1,-.3) {$C_2$};\node at (-3.1,-.3) {$C_1$};

\node at (.6,.4) {$2$};\node at (1.6,1.4) {$1$};\node at (2.6,2.4)
{$7$};\node at (3.6,3.4) {$6$};\node at (-.6,.4) {$4$};\node at
(-1.6,1.4) {$3$};\node at (-0.12,.5) {$5$}; \node at (0,-.3)
{$_{(0,0,1,1,2,q)}$};\node at (-1.81,.83) {$_{(0,0,1,2,2,q)}$};
\node at (-2.8,1.8) {$_{(0,0,2,2,2,q)}$}; \node at (-2.89,2.92)
{$_{(0,0,2,2,3,q)}$};

\node at (-1.88,4) {$_{(0,1,2,2,3,q)}$};\node at (-0.88,5)
{$_{(1,1,2,2,3,q)}$};\node at (0.01,6.11) {$_{(0,0,1,1,2,q-1)}$};
\node at (1.9,.9) {$_{(0,1,1,1,2,q)}$};\node at (2.9,1.9)
{$_{(1,1,1,1,2,q)}$};\node at (4,2.8) {$_{(0,0,0,0,1,q-1)}$};
\node at (4.9,3.9) {$_{(0,0,0,0,1,q)}$};\node at (4.9,4.9)
{$_{(0,0,0,0,2,q)}$};\node at (3.9,6) {$_{(0,0,0,1,2,q)}$}; \node
at (2.9,7) {$_{(0,0,1,1,2,q)}$};

\end{tikzpicture}
\caption{The poset $P_{4,2,1}$ describes the link of
$v=(0,0,1,1,2,q)\in W_{7,q}$}\label{fig:linktjemena}
\end{center}
\end{figure}
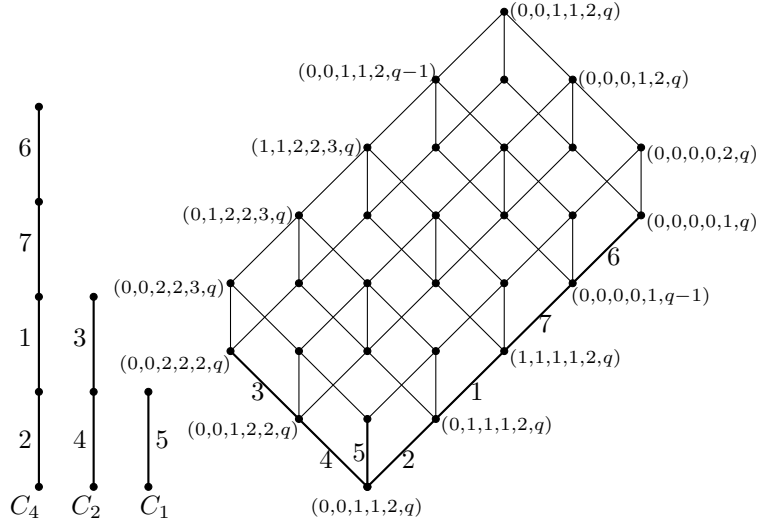
\noindent Now, we define the poset
$P_\mathbf{v}=P_{\alpha_0+\alpha_s+1,\alpha_1,
\ldots,\alpha_{s-1}}=C_{\alpha_0+\alpha_s+1}\times
C_{\alpha_1}\times\cdots\times C_{\alpha_{s-1}}$ as the product of
chains whose lengths are prescribed by $\mathrm{type}(\mathbf{v})$.

 \noindent We label the
edges of $C_{\alpha_0+\alpha_s+1}$ with
$\alpha_0,\alpha_{0}-1,\ldots, 2,1,k,k-1\ldots,k-\alpha_s$, from
the minimum to the maximum of $C_{\alpha_0+\alpha_s+1}$ in this
order, see Figure \ref{fig:linktjemena}. Similarly, the labels of
the edges in the chain $C_{\alpha_i}$ of length $\alpha_i$, from
the bottom to the top, are
$$\alpha_0+\alpha_1+\cdots+\alpha_i,
\alpha_0+\alpha_1+ \cdots+\alpha_i-1,\ldots,
\alpha_0+\alpha_1+\cdots+\alpha_{i-1}+1.$$ We then transfer these
labels to the edges of the Hasse diagram of $P_\mathbf{v}$. If
$\mathbf{x}=(x_1,x_2,\ldots,x_s)$ is covered by
$\mathbf{y}=(y_1,y_2,\ldots,y_s)$ in $P_\mathbf{v}$ (here
$x_j=y_j$ for all $j\neq j_0$ and $x_{j_0}\prec_{C_{j_0}}y_{j_0}$
in $C_{j_0}$), we label $\mathbf{x}\prec_{P_\mathbf{v}}
\mathbf{y}$ in the same way as the edge $x_{j_0}\prec_{C_{j_0}}
y_{j_0}$ is labeled in $C_{j_0}$, see Figure
\ref{fig:linktjemena}. This labeling should not be confused with
an $R$-labeling.

The facets of $\mathrm{link}_{T_{k,q}}(\mathbf{v})$ are in bijection (under
the same map described in (\ref{E:stranaulinku})) with the
permutations from $\mathbb{S}_k$ that satisfy conditions
$(1)-(3)$. We can recognize all of these permutations as the
labels of maximal chains in $P_\mathbf{v}$, and therefore
\mbox{$\mathrm{link}_{T_{k,q}}(\mathbf{v})\cong
\Delta(\overline{P}_\mathbf{v})$.}

Also, there is a natural bijection between the vertices of
$\mathrm{link}_{T_{k,q}}(\mathbf{v})$ and the elements of $P_\mathbf{v}$.
The minimum and the maximum of $P_\mathbf{v}$ correspond to
$\mathbf{v}$. If $\mathbf{x}\in P_\mathbf{v}$
 corresponds to a vertex $\mathbf{w}\in
W_{k,q}$ and $\mathbf{x}\prec_P \mathbf{y}$ is labeled by $i$,
then $\mathbf{y}$ corresponds to $\mathbf{w}+e_i$ (if $i\neq k$)
or $\mathbf{w}-\mathbbm{1}$ if $i=k$, see Figure
\ref{fig:linktjemena}.
\end{proof}
The subset of $\mathbb{S}_k$ containing all permutations that
satisfy conditions $(1)-(3)$ describes the facets of
$\mathrm{star}_{T_{k,q}}(\mathbf{v})$.

\begin{rem}\label{R:linkjepart}The type of a vertex
$\mathbf{v}$ is determined by its \emph{support} in $R_{k,q}$, the
minimal face containing $\mathbf{v}$. For an $(s+1)$-tuple
$\bm{\alpha}=(\alpha_0; \alpha_1,
 \dots, \alpha_{s-1}; \alpha_s)$ with $\alpha_i>0$
for $i\neq 0,s$ and $\alpha_0+\dots+\alpha_s=k-1$, the
$(s-1)$-face
\begin{equation}\label{E:facetype}
F_\alpha=\conv\left\{\mathbf{w}_{1+\alpha_s},
\mathbf{w}_{1+\alpha_s+\alpha_{s-1}},\ldots,
\mathbf{w}_{1+\alpha_s+\cdots+\alpha_{1}}\right\}
\end{equation}
 is the unique minimal face that contains all vertices of $T_{k,q}$
  of type $\bm{\alpha}$ in its
relative interior. This correspondence defines a bijection between
vertex types and faces of $R_{k,q}$. From the definition of the
edgewise triangulation, such a face contains a vertex from
$W_{k,q}$ in its relative interior if and only if $s\le q$. The
induced triangulation on $F_\alpha$ is isomorphic to $T_{s,q}$;
hence, an $(s-1)$-face of $T_{k,q}$ contains ${q-1 \choose s-1}$
internal vertices.

\end{rem}

Now, we recognize the links of vertices of $T_{k,q}$ as the
complexes from $\mathcal{C}_k$.
\begin{thm}\label{T:typeoflinks} If $q\geq k$, then the links of
all vertices of $T_{k,q}$ are exactly all complexes from
$\mathcal{C}_k$. If $q<k$, we have that
$\big\{\mathrm{link}_{T_{k,q}}(\mathbf{v}):\mathbf{v}\in W_{k,q}\big\}
=\big\{K_\lambda: \lambda\in \mathrm{Par}(k,s)\textrm{ for }s\leq q
\big\}$.
\end{thm}

\begin{proof}
Note that a vertex $\mathbf{v}$ whose type is $\mathrm{type}(\mathbf{v})=
\big(\alpha_0;\alpha_1,\ldots,\alpha_{s-1};\alpha_s\big)$ defines
a partition
$\bm{\lambda}(\mathbf{v})=(\lambda_1,\lambda_2,\ldots,\lambda_s)$
of $k$ into exactly $s$ parts (each $\lambda_i$ is equal to some
$\alpha_j$ or $\alpha_0+\alpha_s+1$). From Theorem
\ref{T:linkjeproduct} it follows that
$\mathrm{link}_{T_{k,q}}(\mathbf{v})\cong K_{\lambda(v)}\in \mathcal{C}_k$.
If $q\geq s$, then for any partition
$\bm{\mu}=(\mu_1,\mu_2,\ldots,\mu_s)$ of $k$ into $s$ parts we
consider the vertex
$$\mathbf{v}_\mu=(\underbrace{0,\ldots,0}_{\mu_1-1},
\underbrace{1,\ldots,1}_{\mu_2},\ldots,
\underbrace{s-1,\ldots,s-1}_{\mu_s})\in W_{k,q},$$ and obtain that
$\mathrm{link}_{T_{k,q}}(\mathbf{v}_\mu)\cong K_\mu$. The link of a vertex
lying in the relative interior of an $(s-1)$-dimensional face
corresponds to a partition $\bm \mu \in \mathrm{Par}(k,s)$. When
$q< k$, the links of the vertices of $T_{k,q}$ are precisely the
complexes in $\mathcal{C}_k$ corresponding to partitions of $k$
into at most $q$ parts.
\end{proof}

\noindent For example, the link of a vertex of $T_{k,q}$ lying in
the interior of $R_{k,q}$ corresponds to the partition
$(1,1,\ldots,1)$, see Theorem \ref{T:linkunutr}. The link of a
vertex of $R_{k,q}$ (it is a $(k-2)$-simplex) corresponds to the
partition $(k)$, while there are $\lfloor \frac{k}{2}\rfloor$
combinatorial types of links of vertices that are on the edges of
$R_{k,q}$ (correspond to the partitions of $k$ into two parts).

\begin{cor}
The number of combinatorially distinct links of the vertices of
$T_{k,q}$ is $p_k$ if $q\geq k$, and
$p_{k,1}+p_{k,2}+\cdots+p_{k,q}$ if $q<k$. The $h$-vector of
$\mathrm{link}_{T_{k,q}}(\mathbf{v})$ can be calculated by the formula
(\ref{E:rrforh}), see Proposition \ref{P:hrekrel}.
\end{cor}

Vertices in the relative interior of the same $(s-1)$-dimensional
face will have isomorphic links. The interior vertices of two
distinct $(s-1)$-faces $F=\{w_{i_1},\ldots,w_{i_s}\}$ and
$F'=\{w_{i'_1},\ldots,w_{i'_s}\}$ of $T_{k,q}$ will have
isomorphic links if and only if their types define the same
partition of $k$, that is, when
$$\{i_2-i_1, \ldots,i_s-i_{s-1},k-i_s+i_1-1\}=
\{i'_2-i'_1,\ldots,i'_s-i'_{s-1},k-i'_s+i'_1-1\}.$$

We now determine the number of faces whose interior vertices share
a prescribed link.
\begin{prop}\label{P:faceswithprescribed partition}
Let $\bm{\beta}=(n^{m_1}_1,n^{m_2}_2,\ldots,n^{m_t}_t)$ be a
partition of $k$ into $s$ parts. If $q\geq s$, then the number of
$(s-1)$-dimensional faces of $R_{k,q}$ whose interior vertices
have a link combinatorially equivalent to $K_\beta$ is
$$\frac{k\cdot (s-1)!}{m_1! m_2! \cdots m_t!}.$$
\end{prop}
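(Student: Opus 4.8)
The plan is to identify the $(s-1)$-dimensional faces of $R_{k,q}$ with the $s$-element subsets of $[k]$ and to reduce the statement to a count of subsets of a cyclic group with a prescribed gap pattern.

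First I would pin down the relevant faces. An $(s-1)$-face of $R_{k,q}$ is $F=\{\mathbf{w}_{i_1},\mathbf{w}_{i_2},\ldots,\mathbf{w}_{i_s}\}$ for some $1\le i_1<i_2<\cdots<i_s\le k$. Writing out the type of an interior vertex $\mathbf{v}$ of $F$ via (\ref{E:facetype}) and Theorem \ref{T:linkjeproduct}, one gets $\bm\lambda(\mathbf{v})=g(F)$, where
$$g(F):=\{\,k+i_1-i_s,\ i_2-i_1,\ i_3-i_2,\ \ldots,\ i_s-i_{s-1}\,\}$$
is the multiset of cyclic gaps of $\{i_1,\ldots,i_s\}$ (its $s$ parts are positive and sum to $k$). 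By Theorem \ref{T:razparrazkom}, $link_{T_{k,q}}(\mathbf{v})\cong K_{\bm\beta}$ precisely when $g(F)=\bm\beta$. So it suffices to count the $s$-subsets $S=\{i_1<\cdots<i_s\}$ of $[k]$ with $g(S)=\bm\beta=(n_1^{m_1},n_2^{m_2},\ldots,n_t^{m_t})$; denote this number by $N_\beta$.

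Next I would instead count the pairs $(S,a)$ with $S$ admissible and $a\in S$ a distinguished element; as each admissible $S$ contributes exactly $s$ such pairs, the number of pairs is $sN_\beta$. To $(S,a)$ I assign $(a,\mathbf{c})$, where $\mathbf{c}=(c_1,\ldots,c_s)$ is the sequence of gaps of $S$ read cyclically starting at $a$; then $c_1+\cdots+c_s=k$ and $\mathbf{c}$ is a rearrangement of the multiset $\bm\beta$. Conversely, from any $a\in[k]$ and any composition $\mathbf{c}=(c_1,\ldots,c_s)$ of $k$ rearranging $\bm\beta$, one recovers an $s$-subset by taking the residues modulo $k$ of $a,\ a+c_1,\ a+c_1+c_2,\ \ldots,\ a+c_1+\cdots+c_{s-1}$; since each $c_j\ge 1$ and $c_1+\cdots+c_s=k$, the partial sums $0,c_1,\ldots,c_1+\cdots+c_{s-1}$ are $s$ distinct elements of $\{0,1,\ldots,k-1\}$, so these residues are pairwise distinct and form an $s$-subset $S$ with $a\in S$ and $g(S)=\bm\beta$. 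These two maps are mutually inverse, hence yield a bijection between the admissible pairs $(S,a)$ and $[k]\times\{\text{rearrangements of }\bm\beta\}$. Since the multiset $\bm\beta$ has $\dfrac{s!}{m_1!\,m_2!\cdots m_t!}$ distinct rearrangements, we obtain $sN_\beta=k\cdot\dfrac{s!}{m_1!\cdots m_t!}$ and therefore
$$N_\beta=\frac{k\,(s-1)!}{m_1!\,m_2!\cdots m_t!}.$$

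I do not anticipate a serious obstacle; the one step that needs care is checking that the reconstruction map above is well defined and bijective — in particular that the recovered residues are pairwise distinct, which is exactly where positivity of the parts of $\bm\beta$ is used. A more naive route, counting compositions modulo cyclic rotation and then multiplying, would have to handle gap sequences with nontrivial cyclic symmetry; passing to marked subsets $(S,a)$ sidesteps this, since each marked subset produces exactly one composition, with no over- or under-counting.
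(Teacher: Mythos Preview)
Your argument is correct, and it proceeds differently from the paper's proof. The paper counts the types $(\alpha_0;\alpha_1,\ldots,\alpha_{s-1};\alpha_s)$ directly: since $\alpha_0+\alpha_s+1$ must equal some $n_i$, it fixes that choice (with $n_i$ ways to split $n_i-1$ into $\alpha_0+\alpha_s$), then linearly arranges the remaining $s-1$ parts as $\alpha_1,\ldots,\alpha_{s-1}$; summing $\sum_i n_i\cdot\frac{(s-1)!}{m_1!\cdots(m_i-1)!\cdots m_t!}$ gives the formula. You instead pass to the cyclic-gap description and run a necklace-style double count on marked subsets $(S,a)$, which avoids the case split over which part is ``special'' and explains transparently why the answer is $k/s$ times the number of linear rearrangements of $\bm\beta$. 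Both routes are elementary; the paper's is slightly shorter once the type parametrization (\ref{E:facetype}) is in hand, while yours is a cleaner self-contained combinatorial identity. One cosmetic point: when you reconstruct $S$ from $(a,\mathbf{c})$ you should take representatives in $\{1,\ldots,k\}$ rather than $\{0,\ldots,k-1\}$, but this does not affect the argument.
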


\begin{proof} By Remark \ref{R:linkjepart}, there is a bijection between the
$(s-1)$-dimensional faces of $R_{k,q}$ and the types
$(\alpha_0;\alpha_1,\ldots,\alpha_{s-1};\alpha_s)$ of the vertices
in their relative interiors. To obtain the desired count, we need
to determine how many such types correspond to the given partition
$\bm{\beta}$.

Observe that $\alpha_0+\alpha_s+1=n_i$ for some
$i\in\{1,2,\ldots,t\}$, and there are $n_i$ possible choices for
the first and last coordinates of $\mathrm{type}(\mathbf{v})$. The
remaining coordinates $\alpha_1,\ldots,\alpha_{s-1}$ form a
permutation of the multiset
$\{n_1^{m_1},\ldots,n_i^{m_i-1},\ldots,n_t^{m_t}\}$. Hence, the
number of $(s-1)$-faces of $R_{k,q}$ whose interior vertices have
link isomorphic to $K_\beta$ equals
$$\sum_{i=1}^t n_i\frac{(s-1)!}{m_1! \cdots (m_i-1)! \cdots m_t!}
=\frac{k\cdot (s-1)!}{m_1!\cdots m_t!}.$$
\end{proof}
\noindent In the table below, we apply the above formula for
$k=6$.

$$\begin{tabular}{|c|c||c|c|c||c|c|c||c|c||c||c|}
  \hline
   $\bm{\lambda}\in \mathrm{Par}(6)$ & 6 & 51 & 42
  & 33 & 411 & 321 & 222 & 3111 & 2211 & 21111 & 111111 \\
  \hline
  \# of faces & 6 & 6 & 6 & 3 & 6 & 12 & 2 & 6 & 9 & 6 & 1 \\
  \hline
\end{tabular}$$

\begin{cor} Let $\bm{\beta}=(n^{m_1}_1,n^{m_2}_2,\ldots,n^{m_t}_t)$ be a
partition of $k$ into $s$ parts. The number of vertices of
$T_{k,q}$ whose link is isomorphic to $K_{\beta}$ equals
$$\frac{(q-1)! k}{(q-s)!m_1!\cdots m_t!}.$$
\end{cor}

\section{Links and stars of faces}

In this section we show that the link of any face of $T_{k,q}$ can
be expressed as a join of complexes associated with integer
partitions.
\begin{thm}\label{T:linkF}
The link of a $(t-1)$-dimensional face $F$ in $T_{k,q}$ is the
join of $t$ simplicial complexes, each of which is a reduced order
complex of a product of chains.
\end{thm}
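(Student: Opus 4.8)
The plan is to reduce the statement to the single‑vertex case, Theorem~\ref{T:linkjeproduct}, by iterating the link operation and then applying the standard description of the link of a chain inside an order complex.

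First I would record that every face of $T_{k,q}$ lies in a facet $F(\mathbf{v},\pi)$, whose vertices $\mathbf{v}^{(1)}<\mathbf{v}^{(2)}<\cdots<\mathbf{v}^{(k)}$ form a chain under the coordinatewise order; hence a $(t-1)$‑face $F$ is itself such a chain, $F=\{\mathbf{u}_0<\mathbf{u}_1<\cdots<\mathbf{u}_{t-1}\}$, and in particular it has a coordinatewise‑minimal vertex $\mathbf{u}_0$. Using the elementary identity $link_K(\sigma)=link_{link_K(v)}(\sigma\setminus\{v\})$, valid for any simplicial complex $K$, any face $\sigma\in K$ and any vertex $v\in\sigma$, I would write
$$link_{T_{k,q}}(F)=link_{link_{T_{k,q}}(\mathbf{u}_0)}\big(F\setminus\{\mathbf{u}_0\}\big).$$
By Theorem~\ref{T:linkjeproduct} we have $link_{T_{k,q}}(\mathbf{u}_0)\cong\Delta(\overline P)$, where $P:=P_{\mathbf{u}_0}$ is a product of chains. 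Under this isomorphism $F\setminus\{\mathbf{u}_0\}$ is a face of the order complex $\Delta(\overline P)$, hence a chain $z_1<z_2<\cdots<z_{t-1}$ in $\overline P=P\setminus\{\hat 0,\hat 1\}$; so it remains to compute the link of this chain in $\Delta(\overline P)$.

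Next I would use the standard fact that for a poset $Q$ and a chain $z_1<\cdots<z_m$ in $Q$,
$$link_{\Delta(Q)}\big(\{z_1,\ldots,z_m\}\big)=\Delta(Q_{<z_1})*\Delta\big((z_1,z_2)\big)*\cdots*\Delta\big((z_{m-1},z_m)\big)*\Delta(Q_{>z_m}),$$
which follows at once: a set of elements of $Q$ can be added to the chain precisely when it is a chain and each of its elements is comparable to every $z_i$, i.e. lies below $z_1$, strictly between two consecutive $z_i$, or above $z_m$; conversely, chains chosen in these mutually comparable pieces always reassemble to a chain. Applying this with $Q=\overline P$, so that $Q_{<z_1}=(\hat 0,z_1)$ and $Q_{>z_{t-1}}=(z_{t-1},\hat 1)$ are open intervals of $P$, yields
$$link_{T_{k,q}}(F)\cong\Delta\big(\overline{[\hat 0,z_1]}\big)*\Delta\big(\overline{[z_1,z_2]}\big)*\cdots*\Delta\big(\overline{[z_{t-2},z_{t-1}]}\big)*\Delta\big(\overline{[z_{t-1},\hat 1]}\big),$$
a join of exactly $1+(t-2)+1=t$ complexes (for $t=1$ this degenerates correctly to $link_{T_{k,q}}(F)=\Delta(\overline P)$). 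Finally, since $P=C_{m_1}\times\cdots\times C_{m_r}$, every closed interval $[a,b]$ of $P$ is itself a product of chains, $[a,b]\cong C_{b_1-a_1}\times\cdots\times C_{b_r-a_r}$, so each of the $t$ factors $\Delta(\overline{[a,b]})$ is the reduced order complex of a product of chains, which is exactly the asserted decomposition.

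I expect the substantive content to be concentrated in Theorem~\ref{T:linkjeproduct} (the identification $link_{T_{k,q}}(\mathbf{u}_0)\cong\Delta(\overline P)$), which we are free to quote; the remainder is bookkeeping built on two routine lemmas, the iterated‑link identity and the link‑of‑a‑chain formula. The one point to be attentive to is the counting of join factors: the number $t$ arises from the $t-1$ open intervals $(z_j,z_{j+1})$ between consecutive elements of the chain together with the two end intervals $(\hat 0,z_1)$ and $(z_{t-1},\hat 1)$, which do not coalesce because $\hat 0$ and $\hat 1$ are distinct elements of $P$ even though both represent $\mathbf{u}_0$ — the ``wrap‑around'' between decreasing and increasing the coordinates of $\mathbf{u}_0$ having already been absorbed into a single chain factor of $P$ by Theorem~\ref{T:linkjeproduct}.
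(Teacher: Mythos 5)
Your argument is correct, and it reaches the theorem by a genuinely different route than the paper. The paper proves the statement directly: it identifies the facets of $link_{T_{k,q}}(F)$ with the permutations $\pi\in\mathbb{S}_k$ whose block structure is $S_1|S_2|\cdots|S_t$ (where $S_i$ records the coordinates in which $\mathbf{v}^{(i)}$ and $\mathbf{v}^{(i+1)}$ differ), builds for each block a product of chains $P_{S_i}$, stacks these into a poset $P_F$, and reads off $link_{T_{k,q}}(F)\cong\Delta(\overline{P}_{S_1})*\cdots*\Delta(\overline{P}_{S_t})$ together with $star_{T_{k,q}}(F)\cong\Delta(P_F)$. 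You instead reduce to the already-proved vertex case (Theorem \ref{T:linkjeproduct}) via the iterated-link identity $link_K(\sigma)=link_{link_K(v)}(\sigma\setminus\{v\})$, then invoke the standard fact that the link of a chain in an order complex is the join of the order complexes of the intervening open intervals, and finish with the observation that closed intervals in a product of chains are again products of chains; your count of $t$ join factors (two end intervals plus $t-2$ gaps, degenerating correctly for $t=1$) is right, and the possibly empty factors (when $|S_i|=1$) are consistent with the paper's own convention. The trade-off: your proof is shorter and rests only on quotable standard lemmas, while the paper's direct analysis yields as by-products the explicit permutation description $\mathbb{S}_F$ and the poset $P_F$ for the star, which are reused later (Proposition \ref{P:presjek}, Theorem \ref{T:linksoffaces}); to recover those from your argument one would have to unwind the isomorphism of Theorem \ref{T:linkjeproduct}, but for the statement as given your proof is complete.
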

\begin{proof}
Let $F=\conv\big\{\mathbf{v}^{(1)},\mathbf{v}^{(2)},
\ldots,\mathbf{v}^{(t)}\big\}$ be a $(t-1)$-dimensional face of
$T_{k,q}$. By (\ref{E:istasrana}), for all $i=1,2,\ldots,t-1$, we
have
$$\mathbf{v}^{(i+1)}=\mathbf{v}^{(i)}+\sum_{j\in S_i}e_j
\textrm{ for some }\emptyset\neq S_i\subset[k-1].$$  The sets
 $S_1,S_2,\ldots,S_{t-1}$ are nonempty, pairwise disjoint, and $S_i$ records
 the coordinates where $\mathbf{v}^{(i)}$ and $\mathbf{v}^{(i+1)}$
 differ. We let
 $S_t=[k]\setminus\big(S_1\cup S_2\cup \cdots\cup S_{t-1}\big)$.

Note that $\mathrm{link}_{T_{k,q}}(F)\subseteq
\mathrm{link}_{T_{k,q}}(\mathbf{v}^{(1)}) $, and we can identify the facets
of $\mathrm{link}_{T_{k,q}}(\mathbf{v}^{(1)})$ with a subset of
$\mathbb{S}_k$, see Theorem \ref{T:linkunutr} and Theorem
\ref{T:linkjeproduct}.
\begin{figure}[h]
\begin{center}
\begin{tikzpicture}[scale=1.06]
\draw (0,0) -- (2,2) -- (0,4) -- (2,6) --
(0,8)--(-1,7)--(2,4)--(-1,1)--(0,0); \draw (1,1) -- (0,2); \draw
(0,6) -- (1,7);\foreach \x in {0,...,2}{\draw [fill] (\x,\x)
circle [radius=0.051];}\foreach \x in {0,...,3}{\draw [fill]
(\x-1,\x+1) circle [radius=0.051];}

\foreach \x in {0,...,2}{\draw [fill] (\x,\x+4) circle
[radius=0.051];}\draw [fill] (0,6) circle [radius=0.051]; \draw
[fill] (1,7) circle [radius=0.051];\draw [fill] (-1,7) circle
[radius=0.051];\draw [fill] (0,8) circle [radius=0.051];

\node at (0,-.23) {$_{\mathbf{v}^{(1)}=(1,1,3,3,3,6,7)}$};\node at
(0,8.23) {$_{(1,1,3,3,3,6,7)}$}; \node at (2.1,1)
{$_{(1,2,3,3,3,6,7)}$}; \node at (3,2)
{$_{(2,2,3,3,3,6,7)}$};\node at (-2.1,1)
{$_{(1,1,3,3,3,6,8)}$};\node at (-1.1,2)
{$_{(1,2,3,3,3,6,8)}$};\node at (2.5,3)
{$_{\mathbf{v}^{(2)}=(2,2,3,3,3,6,8)}$};\node at (3,4)
{$_{(2,2,3,3,4,6,8)}$}; \node at (2.5,5)
{$_{\mathbf{v}^{(3)}=(2,2,3,3,4,7,8)}$};\node at (3,6)
{$_{(1,1,2,2,3,6,7)}$};\node at (2.12,7)
{$_{(1,1,2,3,3,6,7)}$};\node at (-2.1,7) {$_{(2,2,4,4,4,7,8)}$};
\node at (-1.1,6) {$_{(2,2,3,4,4,7,8)}$};\node at (-1.1,4)
{$_{(2,2,3,3,3,7,8)}$}; \node at (-6,1.9) {$S_1=\{1,2,7\}$}; \node
at (-6,4) {$S_2=\{5,6\}$};\node at (-6,7.1) {$S_3=\{3,4,8\}$};

\foreach \x in {0,...,1}{\node at (.6-\x,\x+.4) {$_2$};} \foreach
\x in {0,...,2}{\node at (\x-.17,\x+.5) {$_7$};} \node at
(1.6,1.4) {$_1$};\node at (.6,2.4) {$_1$};\node at (.6,4.4)
{$_5$};\node at (1.65,3.4) {$_5$};\node at (.4,3.4) {$_6$};\node
at (1.44,4.4) {$_6$};\node at (.45,5.4) {$_4$};\node at (1.45,6.4)
{$_4$};\node at (0.4,7.4) {$_3$};\node at (-0.6,6.4) {$_3$};

\node at (1.51,5.4) {$_8$};\node at (.335,6.22) {$_8$};\node at
(-.66,7.5) {$_8$};

\end{tikzpicture}
\caption{The poset $P_F$ for
$F=\{\mathbf{v}^{(1)},\mathbf{v}^{(2)},\mathbf{v}^{(3)}\}$ in
$T_{8,9}$}\label{fig:linkstrane}

\end{center}
\end{figure}
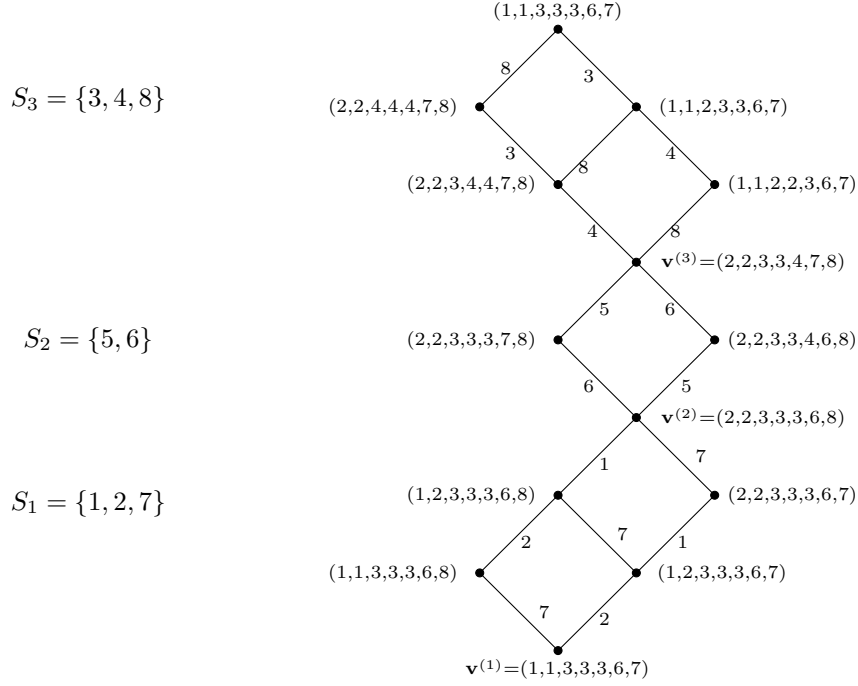

Let $\mathbb{S}_F $ denote the set of all permutations $\pi\in
\mathbb{S}_k$ such that the facet of
$\mathrm{link}_{T_{k,q}}(\mathbf{v}^{(1)})$ defined by $\pi$ also contains
all $\mathbf{v}^{(i)}$, for $i=2,3,\ldots,t$. These permutations
satisfy the following two conditions:
\begin{enumerate}
    \item[(i)] For each $i = 1, \dots, t-1$,
    the numbers in $S_i$ appear before $k$ in $\pi$,
   since the coordinates corresponding to these indices cannot decrease.

    \item[(ii)] The elements of each $S_i$ form a consecutive block in $\pi$,
     following $S_{i-1}$ and preceding $S_{i+1}$
     (recall that the order of elements before $k$ is
     reversed in $\pi'$, see (\ref{E:stranaulinku})).
     Consequently, all elements of $S_t$ appear at the end of $\pi$.
\end{enumerate}

\noindent In short, $\pi\in\mathbb{S}_F$ can be schematically
written as $\pi = S_1|S_2|\cdots|S_{t-1}|S_t$.

\noindent
 Furthermore, there may be some additional conditions on $\pi \in
\mathbb{S}_F$. If some of the coordinates of $\mathbf{v}^{(i)}$
indexed by elements from $S_i$ are mutually equal, or some of them
are equal $0$ or $q$, we have the situation as in the proof of
Theorem \ref{T:linkjeproduct} (the conditions (1)$-$(3)).

 For all $i=1,2,\ldots,t$, we apply
 the same reasoning as
in the proof of Theorem \ref{T:linkjeproduct} to define the poset
$P_{S_i}$ as the product of chains. For example, if $\{c, c+1,
\ldots, c+p\}\subset S_i$ are the coordinates of
$\mathbf{v}^{(i)}$ with equal values, they form a chain of length
$p+1$, which is a factor of $P_{S_i}$. We label the edges of this
chain by $c+p, \cdots, c+1, c$ (from minimum to maximum) and
transfer these labels to $P_{S_i}$. The poset $P_{S_i}$ is the
product of such chains whose total length is $|S_i|$. Finally, we
define $P_F$ as the poset obtained by identifying the minimum of
$P_{S_i}$ with the maximum of $P_{S_{i-1}}$ for all
$i=2,3,\ldots,t$. For $i>j$, all the elements of $P_{S_i}$ are
greater than all the elements of $P_{S_j}$ in $P_F$, see Figure
\ref{fig:linkstrane}. The permutations in $\mathbb{S}_F$, which
define the facets of $\mathrm{link}_{T_{k,q}}(F)$, are in bijection with
the labels of maximal chains of $P_F$. Thus,
$\mathrm{star}_{T_{k,q}}(F)\cong \Delta(P_F)$ and
\begin{equation}\label{E:linkoffaseisajoin} \mathrm{link}_{T_{k,q}}(F)\cong
\Delta(\overline{P}_{S_1})*\Delta(\overline{P}_{S_{2}})*\cdots*
\Delta(\overline{P}_{S_t}).
\end{equation}
In the same way as in the proof of Theorem \ref{T:linkjeproduct},
we conclude that the vertices of $\mathrm{star}_{T_{k,q}}(F)$ correspond to
the elements of $P_F$, as shown in Figure \ref{fig:linkstrane}.
 \end{proof}
\begin{rem}\label{R:linkasjoin}The sizes of the sets $S_i$ define a
partition $\bm{\lambda}=(\lambda_1,\ldots,\lambda_t)$ of $k$ into
$t$ parts. If $\lambda_i=|S_j|$, then the lengths of the chains in
the poset $P_{S_j}$
    determine the partition $\bm{\sigma}_i$ of $\lambda_i$.
 From
(\ref{E:linkoffaseisajoin}) it follows that
\begin{equation}\label{E:Joinofpartition} \mathrm{link}_{T_{k,q}}(F)\cong
K_{\sigma_1}*K_{\sigma_2}*\cdots*K_{\sigma_t}.
\end{equation}

\end{rem}
 \noindent From the
above discussion we are able to describe the links of the faces of
$T_{k,q}$.
\begin{thm}\label{T:linksoffaces} The link of a $(t-1)$-dimensional
 face of $T_{k,q}$ is uniquely determined by a pair $(\bm{\lambda}, M)$, where
$\bm{\lambda}=(\lambda_1,\ldots,\lambda_t)$ is a partition of $k$
into $t$ parts and
$M=\{\bm{\sigma}_1,\bm{\sigma}_2,\ldots,\bm{\sigma}_t\}$ is a
(multi)set containing, for each $i$, a partition $\bm{\sigma}_i$
of $\lambda_i$.
\end{thm}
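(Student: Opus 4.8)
The plan is to read the statement directly off the join decomposition (\ref{E:linkoffaseisajoin}) obtained in the proof of Theorem \ref{T:linkF}. Fix a $(t-1)$-dimensional face $F=conv\{\mathbf v^{(1)},\ldots,\mathbf v^{(t)}\}$ and let $S_1,\ldots,S_t$ be the ordered partition of $[k]$ constructed there, so that $\mathbf v^{(i+1)}=\mathbf v^{(i)}+\sum_{j\in S_i}e_j$ for $i<t$ and $S_t=[k]\setminus(S_1\cup\cdots\cup S_{t-1})$; each $S_i$ is non-empty and $|S_1|+\cdots+|S_t|=k$. First I would run, block by block, the construction from the proof of Theorem \ref{T:linkjeproduct}: inside $S_i$ the indices whose coordinates in $\mathbf v^{(i)}$ are mutually equal are collected into one chain, and the coordinates equal to $0$ or $q$ together with the single wrap-around edge labelled $k$ are handled exactly as there. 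The outcome is that each $P_{S_i}$ is a product of chains whose lengths form a partition $\bm\sigma_i$ of $\lambda_i:=|S_i|$, so by the definition of the complexes $K_\lambda$ we get $\Delta(\overline P_{S_i})\cong K_{\bm\sigma_i}$, and (\ref{E:linkoffaseisajoin}) becomes
\[
link_{T_{k,q}}(F)\cong K_{\bm\sigma_1}*K_{\bm\sigma_2}*\cdots*K_{\bm\sigma_t}.
\]

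Next I would assemble the invariants. Since the $\lambda_i$ are positive and sum to $k$, the tuple $(\lambda_1,\ldots,\lambda_t)$ rearranged in weakly decreasing order is a partition $\bm\lambda$ of $k$ into exactly $t$ parts, and $\bm\sigma_i\in Par(\lambda_i)$ for every $i$; thus $F$ determines the pair $(\bm\lambda,M)$ with $M=\{\bm\sigma_1,\ldots,\bm\sigma_t\}$. Because the join is commutative and associative, the combinatorial type of $link_{T_{k,q}}(F)$ depends only on the multiset of its factors, and by Theorem \ref{T:razparrazkom} the combinatorial type of each factor $K_{\bm\sigma_i}$ is itself determined by $\bm\sigma_i$; hence the link is completely determined by the pair $(\bm\lambda,M)$ and equals the join of the complexes $K_{\bm\sigma_i}\in\mathcal C_{\lambda_i}$. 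This is the asserted correspondence. Conversely, I would check that every such pair with each $\bm\sigma_i$ having at most $q$ parts does occur, by producing a realizing face in the spirit of the vertex $\mathbf v_\mu$ from the proof of Theorem \ref{T:typeoflinks}: pick $\mathbf v^{(1)}\in W_{k,q}$ whose coordinates, read in consecutive blocks of sizes $\lambda_1,\ldots,\lambda_t$, take consecutive values on the $i$-th block with multiplicities given by the parts of $\bm\sigma_i$ (adjusting the last block and, if necessary, using some coordinates equal to $0$ or $q$ to account for the wrap-around), with the values of successive blocks stacked one above the next, and set $\mathbf v^{(i+1)}=\mathbf v^{(i)}+\sum_{j\in S_i}e_j$ where $S_i$ is the $i$-th block; a check against (\ref{E:istasrana}) shows that $F=conv\{\mathbf v^{(1)},\ldots,\mathbf v^{(t)}\}$ is a face of $T_{k,q}$ with link $K_{\bm\sigma_1}*\cdots*K_{\bm\sigma_t}$.

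The step I expect to be the main obstacle is the first one: identifying each $P_{S_i}$ with the product of chains $P_{\bm\sigma_i}$. The construction of Theorem \ref{T:linkjeproduct} treats the coordinates equal to $0$ and to $q$ and the wrap-around label $k$ in a special way, folding them into a single long chain of length $\alpha_0+\alpha_s+1$; in the face setting one must track how these features distribute among the several blocks $S_1,\ldots,S_t$ (the label $k$ and the coordinates equal to $q$ sitting in $S_t$, while the coordinates equal to $0$ may be spread over the earlier blocks) and combine into the appropriate long chains, and then verify that what remains in each $P_{S_i}$ is exactly a product of chains with lengths summing to $|S_i|$. Once that bookkeeping is in place the rest is formal: the passage to combinatorial types via Theorem \ref{T:razparrazkom}, and the routine (if tedious) coordinate verification of the converse construction, which also pins down how large $q$ must be for the realizing face to lie in $W_{k,q}$.
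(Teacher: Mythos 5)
Your argument is essentially the paper's own: Theorem \ref{T:linksoffaces} is stated there as an immediate consequence of the join decomposition (\ref{E:linkoffaseisajoin}) from Theorem \ref{T:linkF} together with the observation that each $P_{S_i}$ is a product of chains whose lengths form a partition $\bm{\sigma}_i$ of $\lambda_i=|S_i|$ (with at most $q$ parts), so the link is the join of the complexes $K_{\bm{\sigma}_i}$ and is encoded by the pair $(\bm{\lambda},M)$. Your additional realizability construction is not in the paper's (implicit) proof but simply adapts the vertex construction $\mathbf{v}_\mu$ from Theorem \ref{T:typeoflinks}, so the proposal is correct and follows the same route.
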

 We observe the
following:
\begin{enumerate}
    \item[(1)] The total number of parts in all
$\bm{\sigma}_i$ is at most $\min\{q,k\}$.
    \item[(2)] The representation of $\mathrm{link}_{T_{k,q}}(F)$ given
    in (\ref{E:Joinofpartition}) is not always unique. As an illustration,
     a $1$-simplex
    appears as the link of a $1$-dimensional face in $T_{4,q}$, and it
    can be represented as
    $K_{(3)}*K_{(1)}\cong K_{(2)}*K_{(2)}\cong \Delta^1$. \item[(3)]
    If every partition $\bm \sigma_i$ has more than two
parts whenever $\lambda_i>1$, then the representation of
$\mathrm{link}_{T_{k,q}}(F)$ in (\ref{E:Joinofpartition}) is unique, by
Theorem \ref{T:razparrazkom} and induction.
\end{enumerate}
We now collect some consequences that follow directly from
Theorems \ref{T:razparrazkom} and \ref{T:linksoffaces}.
\begin{cor}\label{C:osobine} Assume that the link of a
$(t-1)$-dimensional face $F$ of $T_{k,q}$ is determined by the
pair $(\bm{\lambda},M)$ as in Theorem \ref{T:linksoffaces}.
\begin{itemize}
\item[($i$)] If $\bm
\sigma_i=(\sigma_{i,1},\sigma_{i,2}\ldots,\sigma_{i,{a_i}})\in
\mathrm{Par}(\lambda_i)$ for all $i\in [t]$, then the number of facets of
$\mathrm{link}_{T_{k,q}}(F)$ is
$$\frac{\lambda_1!}{\sigma_{1,1}!\sigma_{1,2}!\cdots\sigma_{1,a_1}!}
\cdot\frac{\lambda_2!}{\sigma_{2,1}!\sigma_{2,2}!\cdots\sigma_{2,a_2}!}\cdot
\cdots \cdot
\frac{\lambda_t!}{\sigma_{t,1}!\sigma_{t,2}!\cdots\sigma_{t,{a_t}}!}.
$$

\item[($ii$)] The parts of $\bm{\lambda}$ that are equal to $1$ do
not affect the link of $F$.
 \item[($iii$)] The link of a $(t-1)$-face of
$T_{k,q}$ is a $(k-t-1)$-dimensional complex.
    \item[($iv$)] If $q>k-t$, then
    there are $p_{k,t}$
    combinatorially distinct spheres among the links of all
    $(t-1)$-dimensional faces of $T_{k,q}$
    \item[($v$)] For any fixed $m\in \mathbb{N}$,
    all $m$-dimensional
    simplicial complexes that appear as links in
    $T_{k,q}$ (for all $k$ and $q$) already occur in $T_{2m+2,2m+2}$.
    \item[($vi$)] The number of $m$-dimensional spheres among
    the links of faces of all complexes $T_{k,q}$ (for all $k$ and $q$)
     is $p_{m+1}$.
\end{itemize}
\end{cor}
\begin{proof}The statement ($i$) follows from (\ref{E:Joinofpartition})
and (\ref{E:facets}), and ($ii$) follows from the fact that
$K_{(1)}=\emptyset$. To prove ($iii$), we calculate
$$\mathrm{dim }\, \mathrm{link}_{T_{k,q}}(F)=(\lambda_1-2)+
(\lambda_2-2)+\cdots+(\lambda_t-2)+t-1=k-t-1.$$
\begin{enumerate}
\item[($iv$)] From Theorem \ref{T:razparrazkom} and equation
(\ref{E:Joinofpartition}) it follows that $\mathrm{link}_{T_{k,q}}(F)$ is a
sphere if and only if $\bm \sigma_i=(1,1,\ldots,1)$ (with
$\lambda_i$ ones) for all $i$.

\item[($v$)] From ($iii$) it follows that an $m$-dimensional
simplicial complex is the link of a $(k-m-2)$-dimensional face of
$T_{k,q}$. If $k>2m+2$, then any partition of $k$ into $k-m-1$
summands has at least $k-2m-2$ summands equal to $1$, and they can
be ignored by ($ii$). \item[($vi$)] From (\ref{E:Joinofpartition})
it follows that the link of a face is an $m$-dimensional sphere if
and only if $\lambda_1+\lambda_2+\cdots+\lambda_t=m+t+1$ and the
corresponding partitions are $\bm{\sigma}_i=(1,1,\ldots,1)$ (with
at least two parts). In that case
$(\lambda_1-1,\lambda_2-1,\ldots,\lambda_t-1)$ is a partition of
$m+1$.\end{enumerate}\end{proof}

We now turn to enumerating the combinatorially distinct
$(m-1)$-dimensional complexes that appear as the links in some
$T_{k,q}$. Such a complex is determined by a partition $\bm
\lambda=(\lambda_1,\lambda_2,\ldots, \lambda_t)\textrm{, such that
} \lambda_1+\lambda_2+\ldots+ \lambda_t=m+t,$ and
$\{\bm{\sigma_1},\bm{\sigma_2},\ldots,\bm{\sigma_t}\}$ where $\bm
\sigma_i \in \mathrm{Par} (\lambda_i)$, see Theorem \ref{T:linksoffaces}.
If $\bm{\sigma}_i=(\lambda_i)$, then
    $K_{\sigma_i} \cong \Delta_{\lambda_i-2}$, and the join of these
    complexes in (\ref{E:Joinofpartition}) is $\Delta^{p-1}$,
    where
    $p=\sum_{i:\sigma_i=(\lambda_i)}\left(\lambda_i-1\right)$.
    An $(m-1)$-dimensional link in
$T_{k,q}$ has the form $\Delta^{p-1}*L$, where $L$ is an
$(m-p-1)$-dimensional complex obtained as the join of complexes
$K_{\sigma_i}$ corresponding to the partitions $\bm \sigma_i$ with
at least two parts. Although $\Delta^{p-1}$ admits many join
representations, the join representation of $L$ is unique (see (2)
and (3) of Remark \ref{R:linkasjoin}).
    Let $Q_s$ denote the number of $(s-1)$-dimensional complexes that can be
expressed as joins of complexes $K_\sigma$, where each partition
$\bm{\sigma}$ has at least two parts. We set $Q_0=1$, counting the
empty complex.
\begin{lem}\label{L:Q_s}For all $s\in \mathbb{N}$ we have
$$
Q_s=\sum_{\substack{\mu\in \mathrm{Par}(s)\\\mu=(n_1^{m_1},\ldots,
n_j^{m_j})} } \left({p_{n_{1}+1}-1\choose
m_1}\right)\cdots\left({p_{n_{j}+1}-1\choose m_j}\right)
$$
\end{lem}
\begin{proof}
Consider the complexes of the form
$K_{\sigma_1}*\cdots*K_{\sigma_v}$, where $\lambda_1
+\lambda_2+\cdots+\lambda_v=s+v$, and each $\bm{\sigma}_i$ is a
partition of $\lambda_i>1$ with at least two parts. Recall that
all complexes obtained in this way are distinct. For any such
partition $(\lambda_1,\lambda_2,\ldots,\lambda_v)$ of $v+s$, we
consider the partition $\bm{\mu}=(n_1^{m_1},\ldots,
n_j^{m_j})=(\lambda_1-1,\lambda_2-1,\ldots,\lambda_v-1)\in
\mathrm{Par}(s).$ For each $n_i$ we must choose $m_i$ from
$p_{n_i+1} -1$ possible partitions, excluding the one consisting
of a single part.
\end{proof}
Here are some values of $Q_s$.
$$\begin{tabular}{|c|c|c|c|c|c|c|c|c|c|c|c|}
  \hline
  $s$ & 0 & 1 & 2 & 3 & 4 & 5 & 6 & 7 & 8 & 9 & 10\\
  \hline
  $Q_s$ & 1 & 1 & 3 & 7 & 16 & 34 & 74 & 151 & 312 & 625 & 1245 \\
  \hline
\end{tabular}$$
The generating function for the sequence $(Q_s)$, obtained from
Lemma~\ref{L:Q_s}, is given by
\begin{equation}\label{E:genforQ_s}
   Q(x) = \sum_{s \in \mathbb{N}_0} Q_s x^s
   = \prod_{n \in \mathbb{N}} (1 - x^n)^{1 - p_{n+1}}.
\end{equation}
Finally, we determine $C_m$, the number of all $(m-1)$-dimensional
links in all $T_{k,q}$.

\begin{thm}
The generating function for the sequence $(C_n)$ is
$$C(x)=\sum_{m\in \mathbb{N}_0}C_mx^m=\frac{Q(x)}{1-x}=
\frac{1}{1-x}\prod_{n\in\mathbb{N}}(1-x^n)^{1-p_{n+1}}.$$
\end{thm}\begin{proof} The statement follows from the fact that
$C_m=Q_m+Q_{m-1}+\cdots+Q_1+Q_0$ and (\ref{E:genforQ_s}).
\end{proof}
\noindent The table below shows some values for the number of
distinct $(m-1)$-dimensional links in the complexes $T_{k,q}$:
$$\begin{tabular}{|c|c|c|c|c|c|c|c|c|c|c|c|}
  \hline
  $m$ & 0 & 1 & 2 & 3 & 4 & 5 & 6 & 7 & 8 & 9 & 10\\
  \hline
  \# of links & 1& 2 & 5 & 12 & 28 & 62 & 136 & 287 & 599 & 1224 & 2469\\
  \hline
\end{tabular}$$
 In the same way, with somewhat complicated calculations,
 we can obtain the number of combinatorially
different links of all $m$-dimensional faces in $T_{k,q}$.

\section{The star cluster of a facet and its $h$-vector}
 The notion of star cluster was introduced in
\cite{Barm}. The \textit{star cluster} of a face $\sigma$ in a
simplicial complex $K$ is $SC_K(\sigma)= \bigcup_{v\in
\sigma}\mathrm{star}_K(v)$. In this section we will analyze the star
cluster of a facet $F=F(\mathbf{v},Id)=
 \conv\left\{\mathbf{v} ^{(1)},\mathbf{v}
^{(2)}, \ldots,\mathbf{v}^{(k)}\right\}$ in $T_{k,q}$. For $1\leq
i<j\leq k$, equation \eqref{E:vertices1} yields
\begin{equation}\label{E:Razlika}
\mathbf{v}^{(j)} -\mathbf{v}^{(i)} = e_{k+1-j}+e_{k+2-j}+\cdots
+e_{k-i}=
(\underbrace{0,\ldots,0}_{k-j},\underbrace{1,\ldots,1}_{j-i},
\underbrace{0,\ldots,0}_{i-1}).
\end{equation}
\noindent Throughout this section, assume $
\mathbf{v}=\mathbf{v}^{(1)}=(v_1,v_2,\ldots,v_{k-1})$ satisfies
$0<v_1<v_2<\cdots<v_{k-1}<q-1$, so that all vertices of $F$ lie in
the interior of $R_{k,q}$.

For each $i$, let $\mathcal{F}_i$ denote the set of the facets of
$\mathrm{star}_{T_{k,q}}(\mathbf{v}^{(i)})$. As noted in Remark
\ref{R:star}, we identify $\mathcal{F}_i$ with $\mathbb{S}_k$.

\begin{prop}\label{P:presjek}
The facet of $\mathcal{F}_j$ indexed by $\pi \in \mathbb{S}_k$ is
also contained in $\mathcal{F}_i$ for some $i<j$ if and only if
the last $j-i$ entries of $\pi$ belong to the set $\{k-j+1, k-j+2,
\dots, k-i\}$.
\end{prop}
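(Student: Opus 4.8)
The plan is to translate membership in $\mathcal{F}_i$ into a vertex--incidence condition and then compare it with the explicit vertex list of the facet indexed by $\pi$. Since $SC_{T_{k,q}}(F)=\bigcup_{m=1}^{k}star_{T_{k,q}}(\mathbf{v}^{(m)})$ and the facets of $star_{T_{k,q}}(\mathbf{v}^{(i)})$ are exactly the facets of $T_{k,q}$ containing the vertex $\mathbf{v}^{(i)}$, the facet $G=F(\mathbf{v}^{(j)}_\pi,\pi')$ lies in $\mathcal{F}_i$ if and only if $\mathbf{v}^{(i)}$ is one of the $k$ vertices of $G$. So it suffices to decide, for $i<j$, when $\mathbf{v}^{(i)}$ is a vertex of $G$.

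For the second step I would write those vertices down. Let $p$ be the position of $k$ in $\pi$, so $\pi=\pi_1\ldots\pi_{p-1}\,k\,\pi_{p+1}\ldots\pi_k$. By Theorem~\ref{T:linkunutr} (see (\ref{E:stranaulinku}) and the vertex list following it) the vertices of $G$ are
$$\mathbf{v}^{(j)}-e_{\pi_k}-\cdots-e_{\pi_{p+1}},\ \ldots,\ \mathbf{v}^{(j)}-e_{\pi_k},\ \mathbf{v}^{(j)},\ \mathbf{v}^{(j)}+e_{\pi_1},\ \ldots,\ \mathbf{v}^{(j)}+e_{\pi_1}+\cdots+e_{\pi_{p-1}}.$$
Hence, besides $\mathbf{v}^{(j)}$ itself, $G$ has an \emph{ascending} family $\mathbf{v}^{(j)}+e_{\pi_1}+\cdots+e_{\pi_r}$ ($1\leq r\leq p-1$), each coordinatewise $\geq\mathbf{v}^{(j)}$, and a \emph{descending} family $\mathbf{v}^{(j)}-\sum_{m=k-r+1}^{k}e_{\pi_m}$ ($1\leq r\leq k-p$), each coordinatewise $\leq\mathbf{v}^{(j)}$, and in this last family the set of subtracted indices is precisely the set of the last $r$ entries of $\pi$.

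The third step uses (\ref{E:Razlika}): it gives $\mathbf{v}^{(i)}=\mathbf{v}^{(j)}-(e_{k-j+1}+\cdots+e_{k-i})$, that is $\mathbf{v}^{(i)}=\mathbf{v}^{(j)}-e_S$ with $S=\{k-j+1,\ldots,k-i\}$ a set of $j-i$ distinct indices not containing $k$. Since $i<j$ we have $S\neq\emptyset$, so $\mathbf{v}^{(i)}\neq\mathbf{v}^{(j)}$ and $\mathbf{v}^{(i)}\leq\mathbf{v}^{(j)}$ coordinatewise; an ascending vertex cannot equal $\mathbf{v}^{(j)}-e_S$ (that would force $e_{\pi_1}+\cdots+e_{\pi_r}=-e_S$, impossible since $r\geq 1$ and $S\neq\emptyset$). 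Therefore $\mathbf{v}^{(i)}$ is a vertex of $G$ if and only if it coincides with a descending vertex, i.e. if and only if $S$ is the set of the last $r$ entries of $\pi$ for some $r$; as $|S|=j-i$ this forces $r=j-i$. So the condition becomes: the set of the last $j-i$ entries of $\pi$ equals $\{k-j+1,\ldots,k-i\}$, and since both are sets of $j-i$ distinct numbers this is the same as saying the last $j-i$ entries of $\pi$ all belong to $\{k-j+1,\ldots,k-i\}$. Combining with the first step proves the proposition (the existential ``for some $i<j$'' version is then immediate).

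I do not expect a real obstacle; the only place needing care is the bookkeeping in the second step --- notably that passing from $\pi$ to $\pi'$ reverses the order, so the descending vertices of $G$ are indexed by the suffixes of $\pi$, not its prefixes --- along with the degenerate cases $p=1$ (no ascending vertices) and $p=k$ (no descending vertices, hence no $\mathbf{v}^{(i)}$ with $i<j$ lies on $G$, which agrees with the criterion since then $\pi_k=k\notin\{k-j+1,\ldots,k-i\}$). One should also observe that the stated condition automatically puts $k$ before the final $j-i$ slots of $\pi$, so $p\leq k-(j-i)$ and the descending chain is long enough --- no extra hypothesis on $\pi$ is needed.
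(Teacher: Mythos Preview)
Your proof is correct and follows essentially the same route as the paper: both reduce membership in $\mathcal{F}_i\cap\mathcal{F}_j$ to the vertex--incidence condition ``$\mathbf{v}^{(i)}$ lies on the facet indexed by $\pi$'' and then read off the answer from (\ref{E:Razlika}). The only cosmetic difference is that the paper phrases the second step as an appeal to the block description of $\mathbb{S}_F$ in Theorem~\ref{T:linkF}, whereas you unpack that description by writing out the ascending/descending vertex list directly; your version is a bit more self-contained and your handling of the boundary cases $p=1$, $p=k$ is a nice touch.
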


\begin{proof}
We use the bijection between $\mathbb{S}_k$ and $\mathcal{F}_j$
described in (\ref{E:stranaulinku}), Theorem \ref{T:linkunutr}. If
the facet $F(\mathbf{v}^{(j)}_{\pi}, \pi') \in \mathcal{F}_j$
determined by $\pi\in \mathbb{S}_k$ also contains
$\mathbf{v}^{(i)}$, then, by (\ref{E:Razlika}), the coordinates of
$\mathbf{v}^{(j)}_{\pi}$ at positions $k-j+i+1, k-j+i+2, \dots,
k-i$ are strictly smaller than those of $\mathbf{v}^{(j)}$.
Therefore, $k-j+i+1, k-j+i+2, \dots, k-i$ appear in $\pi$ after
$k$ (cf. (\ref{E:stranaulinku})). Moreover, after increasing these
coordinates of $\mathbf{v}^{(j)}_{\pi}$ at these positions as in
(\ref{E:vertices1}), we must recover $\mathbf{v}^{(j)}$. It then
follows from (\ref{E:stranaulinku}) that the numbers $k-j+i+1,
k-j+2, \dots, k-i$ appear at the end of $\pi$.
\end{proof}

Let $\mathcal{F}=\mathcal{F}_1\cup\cdots\cup\mathcal{F}_k$ denote
the set of all facets of $SC_{T_{k,q}}(F)$.
\begin{thm} The number of facets in $SC_{T_{k,q}}(F)$ is
 \begin{equation}\label{E:FUI}
|\mathcal{F}|=k\cdot k!+\sum_{t=2}^k(-1)^{t-1}\sum_{1\leq
i_1<\cdots<i_t\leq k}(i_2-i_1)!(i_3-i_2)!\cdots(k-i_t+i_1)!.
\end{equation}
\end{thm}
\begin{proof} By Proposition \ref{P:presjek} we have
$$|\mathcal{F}_{i_1}\cap \mathcal{F}_{i_2}\cap\cdots\cap \mathcal{F}_{i_t}|=
(i_2-i_1)!(i_3-i_2)!\cdots(k-i_t+i_1)!.$$ Applying the principle
of inclusion-exclusion now yields~\eqref{E:FUI}.
\end{proof}
Another formula for the number of facets in $SC_{T_{k,q}}(F)$ can
be obtained by combining Proposition \ref{P:faceswithprescribed
partition} and ($i$) of Corollary \ref{C:osobine}.
\begin{equation}\label{E:FIPconcrete}
|\mathcal{F}|=\sum_{s=1}^{k}(-1)^{s-1}k\cdot(s-1)!
\sum_{\substack{\lambda=(\lambda_1,\ldots,\lambda_s)\in \mathrm{Par}(k)
\\\lambda=(n_1^{m_1},\ldots,n_j^{m_j})}}
\frac{\lambda_1!\cdots\lambda_s!}{m_1!\ldots m_j!}.
\end{equation}
The facets of $SC_{T_{k,q}}(F)$ also can be described as a subset
of $\mathbb{S}_{k+1}$ using a new (up to our knowledge)
permutation statistic.
\begin{defn}
 For a
permutation $\pi=\pi_1\pi_2\ldots\pi_n \in \mathbb{S}_n$ we define
the \textit{faithful initial part} (or simply \textit{init}) of
$\pi$ as $\mathrm{init}(\pi)=
\min\big\{t:\big\{\pi_1,\pi_2,\ldots,\pi_t\big\}=[t]\big\}.$ Let
$X_n$ denote the number of permutations $\pi \in \mathbb{S}_n$ for
which $\mathrm{init}(\pi)=n$.
\end{defn}
\begin{prop}\label{P:rekrelzaX_n}
The numbers $X_n$ satisfy the following recursive relation:
\begin{equation}\label{E:relzaX_n}
    X_n=n!-(n-1)!X_1-(n-2)!X_2-\cdots-2!X_{n-2}-X_{n-1}.
\end{equation}
\end{prop}
\begin{proof}
Note that the first $j$ entries of a permutation of $\mathbb{S}_n$
whose faithful initial part is $j$ form a permutation of
$\mathbb{S}_j$ whose init is $j$. Therefore, for $j<n$, there are
exactly $X_j(n-j)!$ permutations from $\mathbb{S}_n$ whose init is
exactly $j$.
\end{proof}
\begin{thm}\label{T:facetsareinit} If all of the vertices of the facet
$F=F(\mathbf{v},Id)$ are in the interior of $R_{k,q}$, the number
of facets of $SC_{T_{k,q}}(F)$ is $X_{k+1}$.
\end{thm}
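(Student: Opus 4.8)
The plan is to sort the facets of $SC_{T_{k,q}}(F)$ according to the smallest index of a star whose facet set contains them, and then to collapse the resulting count using the recursion \eqref{E:relzaX_n}. Since $\mathcal{F}=\mathcal{F}_1\cup\cdots\cup\mathcal{F}_k$, the sets $\mathcal{F}_j\setminus(\mathcal{F}_1\cup\cdots\cup\mathcal{F}_{j-1})$, for $j=1,\dots,k$, are pairwise disjoint and their union is $\mathcal{F}$, so $|\mathcal{F}|$ is the sum of their cardinalities. Because every $\mathbf{v}^{(i)}$ is an interior vertex of $R_{k,q}$, each $\mathcal{F}_i$ is identified with all of $\mathbb{S}_k$, and Lemma \ref{L:presjek} applies uniformly for every $j\in[k]$, giving $\bigl|\mathcal{F}_j\setminus(\mathcal{F}_1\cup\cdots\cup\mathcal{F}_{j-1})\bigr|=k!-\sum_{i=1}^{j-1}X_i(k-i)!$ (for $j=1$ this is just $|\mathcal{F}_1|=k!$). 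Summing over $j$ and counting that the term $X_i(k-i)!$ is subtracted once for each $j$ with $i<j\le k$, i.e. $k-i$ times, I obtain
$$|\mathcal{F}|=\sum_{j=1}^{k}\Bigl(k!-\sum_{i=1}^{j-1}X_i(k-i)!\Bigr)=k\cdot k!-\sum_{i=1}^{k-1}(k-i)\,X_i\,(k-i)!.$$

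The second step is a telescoping manipulation. Using $(k-i)(k-i)!=(k-i+1)!-(k-i)!$ we split the sum and get
$$|\mathcal{F}|=k\cdot k!-\sum_{i=1}^{k-1}X_i(k+1-i)!+\sum_{i=1}^{k-1}X_i(k-i)!.$$
Now I invoke \eqref{E:relzaX_n} in two instances: $X_k=k!-\sum_{i=1}^{k-1}X_i(k-i)!$, so $\sum_{i=1}^{k-1}X_i(k-i)!=k!-X_k$; and $X_{k+1}=(k+1)!-\sum_{i=1}^{k}X_i(k+1-i)!$, whose last summand ($i=k$) equals $X_k$, so $\sum_{i=1}^{k-1}X_i(k+1-i)!=(k+1)!-X_{k+1}-X_k$. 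Substituting both expressions, the $X_k$ contributions cancel and
$$|\mathcal{F}|=k\cdot k!-\bigl((k+1)!-X_{k+1}-X_k\bigr)+\bigl(k!-X_k\bigr)=(k+1)\cdot k!-(k+1)!+X_{k+1}=X_{k+1},$$
since $(k+1)\cdot k!=(k+1)!$.

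There is no genuinely hard step here once Lemma \ref{L:presjek} is available: the argument is bookkeeping with factorial sums combined with the defining recursion for $X_n$. The two points that deserve an explicit line are that the ``first-occurrence'' sets $\mathcal{F}_j\setminus(\mathcal{F}_1\cup\cdots\cup\mathcal{F}_{j-1})$ really do partition $\mathcal{F}$, and that the hypothesis $0<v_1<\cdots<v_{k-1}<q-1$ is exactly what guarantees all vertices $\mathbf{v}^{(i)}$ of $F$ are interior, so that Lemma \ref{L:presjek} can be applied for each $j$ without boundary corrections. Everything else is the telescoping above.
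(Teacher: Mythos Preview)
Your proof is correct and follows essentially the same approach as the paper's first proof: the same disjoint decomposition of $\mathcal{F}$, the same appeal to Lemma~\ref{L:presjek}, and the same reduction to the recursion~\eqref{E:relzaX_n} applied to $X_k$ and $X_{k+1}$, with only cosmetic differences in how the factorial algebra is organized (you telescope via $(k-i)(k-i)!=(k-i+1)!-(k-i)!$, the paper instead expands $X_{k+1}$ and then $X_k$ directly). The paper also records a short bijective proof, mapping each facet in $\mathcal{F}_j\setminus(\mathcal{F}_1\cup\cdots\cup\mathcal{F}_{j-1})$ to a permutation in $\mathbb{S}_{k+1}$ with $init=k+1$ and $k{+}1$ in position $j$, which you may find a pleasant alternative.
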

\begin{proof}
We use the following decomposition of
$\mathcal{F}$\begin{equation}\label{E:disjointunion}
\mathcal{F}=\mathcal{F}_1 \sqcup \Big(\mathcal{F}_2\setminus
\mathcal{F}_1\Big)\sqcup\cdots \sqcup\Big(\mathcal{F}_j
\setminus\bigcup_{i=1}^{j-1}\mathcal{F}_i\Big)\sqcup\cdots
\sqcup\Big(\mathcal{F}_k\setminus\bigcup_{i=1}^{k-1}\mathcal{F}_i\Big).
\end{equation}
For each $j\in[k]$, we describe a bijection between the
$\mathcal{F}_j\setminus(\mathcal{F}_1\cup
   \cdots\cup \mathcal{F}_{j-1})$ and
   $\{\pi\in \mathbb{S}_{k+1}:\mathrm{init}(\pi)=k+1\textrm{, } \pi_j=k+1\}$.
   Recall that a permutation
   $\pi=\pi_1\pi_2\ldots\pi_k \in \mathbb{S}_k$ that defines a facet of
$\mathcal{F}_j\setminus(\mathcal{F}_1\cup
   \cdots\cup \mathcal{F}_{j-1})$ satisfies the
   following $j-1$ conditions, see Proposition \ref{P:presjek}:
   $$\{\pi_{k-j+1+i},\ldots,\pi_{k}\}
   \neq\{k-j+1,\ldots,k-i\}\textrm{ for all }i=1,2,\ldots,j-1.$$
\noindent We map this facet (the corresponding permutation $\pi$)
to the permutation $\widehat{\pi}\in \mathbb{S}_{k+1}$ by
reversing the order, relabeling $t\mapsto t+j \textrm{ (mod } k)$
and inserting $k+1$ at the position $j$
$$\widehat{\pi}=(\pi_k+j)\cdots (\pi_{k-j+2}+j) k+1
(\pi_{k-j+1}+j)\cdots (\pi_{1}+j) \textrm{ }(\textrm{the sum is
modulo } k).$$ This ensures that $\mathrm{init}(\widehat{\pi})=k+1$, with
$k+1$ placed exactly at the position $j$.
\end{proof}
Observe that (\ref{E:FUI}) and (\ref{E:FIPconcrete}) provide exact
formulas for $X_{k+1}$.

 We exploit a slightly modified
lexicographic order $<_L$ on $\mathbb{S}_k$ (a shelling order for
 $Sd(\partial\Delta^{k-1})$, described in (\ref{E:lexshell})) to
obtain an explicit shelling of $SC_{T_{k,q}}(F)$.
\begin{prop}\label{P:myshel} The linear order
$<_I$ on $\mathbb{S}_k$ defined by
\begin{equation}\label{E:newshelling}
\pi <_I \sigma \Longleftrightarrow  \left\{%
\begin{array}{lc}
    \mathrm{init}(\pi)<\mathrm{init}(\sigma), & \hbox{\textrm{or};} \\
    \pi<_L \sigma, &
    \hbox{\textrm{if }} \mathrm{init}(\pi)=\mathrm{init}(\sigma)\\
\end{array}%
\right.
\end{equation}
 is a shelling order for $Sd(\partial\Delta^{k-1})$.
\end{prop}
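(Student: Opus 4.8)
The plan is to verify the shelling condition (\ref{E:defshell}) directly for the order $<_I$. So fix $\sigma \in \mathbb{S}_k$ and let $\pi <_I \sigma$. We must exhibit $\tau <_I \sigma$ and a vertex of $F_\sigma$ such that $F_\pi \cap F_\sigma \subseteq F_\tau \cap F_\sigma = F_\sigma \setminus \{v\}$. Recall that $F_\sigma$ has vertices $\{\sigma_1\}, \{\sigma_1,\sigma_2\}, \ldots, \{\sigma_1,\ldots,\sigma_{k-1}\}$, and a facet $F_\tau$ shares the vertex $\{\sigma_1,\ldots,\sigma_j\}$ with $F_\sigma$ exactly when $\{\tau_1,\ldots,\tau_j\}=\{\sigma_1,\ldots,\sigma_j\}$; so $F_\tau \cap F_\sigma = F_\sigma \setminus \{v_j\}$ (where $v_j = \{\sigma_1,\ldots,\sigma_j\}$) precisely when $\tau$ agrees with $\sigma$ on all prefixes except the $j$-th, i.e. $\tau$ is obtained from $\sigma$ by swapping $\sigma_j$ and $\sigma_{j+1}$.

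The key observation is to identify, for the given $\pi <_I \sigma$, the smallest index $j$ on which the prefix sets of $\pi$ and $\sigma$ disagree, i.e. $\{\pi_1,\ldots,\pi_j\} \neq \{\sigma_1,\ldots,\sigma_j\}$ but $\{\pi_1,\ldots,\pi_i\} = \{\sigma_1,\ldots,\sigma_i\}$ for $i<j$. Then $F_\pi \cap F_\sigma \subseteq F_\sigma \setminus \{v_j\}$. The candidate is $\tau = \sigma \cdot (j,j+1)$ (the transposition of positions $j$ and $j+1$), which satisfies $F_\tau \cap F_\sigma = F_\sigma \setminus \{v_j\}$ as noted above. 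What remains is to show $\tau <_I \sigma$. Here I would split into two cases according to the definition (\ref{E:newshelling}).

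First, the case $init(\pi) < init(\sigma)$. I would argue that $j \le init(\sigma)$: indeed if $j > init(\sigma)$, then the prefixes of $\pi$ and $\sigma$ of length $init(\sigma)$ coincide, forcing $init(\pi)=init(\sigma)$, a contradiction. Now I claim swapping $\sigma_j \leftrightarrow \sigma_{j+1}$ does not increase $init$, and in fact $init(\tau) < init(\sigma)$ when the swap occurs strictly before $init(\sigma)$; the point is that $\{\sigma_1,\ldots,\sigma_{init(\sigma)}\}=[init(\sigma)]$ and swapping two entries among the first $j \le init(\sigma)$ positions keeps $\{\tau_1,\ldots,\tau_{init(\sigma)}\}=[init(\sigma)]$, while it cannot create an earlier faithful prefix unless — and this needs a small check — one actually appears, which at worst gives $init(\tau) \le init(\sigma)$; combined with $\tau$ agreeing with $\sigma$ past position $j+1$ one gets $\tau <_I \sigma$ either via strictly smaller $init$, or via $init(\tau)=init(\sigma)$ together with $\tau <_L \sigma$ (which holds because $\tau_j = \sigma_{j+1} < \sigma_j$ — we should choose $j$ to be a descent position of $\sigma$, or rather argue the swap can be taken at a position realizing the lex-smallest disagreement). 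Second, the case $init(\pi)=init(\sigma)$ and $\pi <_L \sigma$: here $j$ is exactly the first position where $\pi$ and $\sigma$ differ as sequences (since prefix sets agreeing up to $i$ and differing at $j$, with $j$ minimal, means the sequences agree up to $j-1$), and $\pi_j < \sigma_j$ forces the minimum of $\{\sigma_j,\sigma_{j+1},\ldots\}$ placed at position $j$; taking $\tau$ to be $\sigma$ with $\sigma_j$ swapped with the later occurrence of that minimum — no, we must stay with an adjacent transposition, so instead note $\sigma_{j+1} < \sigma_j$ can be arranged because $\pi$ being lex-smaller at position $j$ with equal prefix forces $\sigma_j$ not to be the minimum of the remaining values, and among $\{\sigma_{j+1},\ldots,\sigma_k\}$ the relevant transposition at position $j$ yields $\tau <_L \sigma$ with $init$ unchanged (swapping within positions $\ge j \ge 1$ past the faithful prefix, or checking the faithful prefix is unaffected when $j > init(\sigma)$, and $j \le init(\sigma)$ is handled as before).

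The main obstacle, and the part I would spend the most care on, is the bookkeeping around $init$ under an adjacent transposition: verifying that $\tau = \sigma\cdot(j,j+1)$ satisfies $init(\tau) \le init(\sigma)$ always, and pinning down exactly when it drops strictly versus when we must fall back on the lexicographic tie-breaker $\tau <_L \sigma$. The cleanest route is probably to choose $j$ as the \emph{first} position where $\pi$ and $\sigma$ disagree as sequences when $init(\pi)=init(\sigma)$, and as the first position where their prefix \emph{sets} disagree when $init(\pi)<init(\sigma)$, then in each case show the adjacent transposition $(j,j+1)$ on $\sigma$ — possibly after checking $\sigma_{j+1}$ is the right smaller element — produces a $\tau$ with the required two properties. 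One should also handle the degenerate boundary case $j = k-1$ (so $v_j = [k]\setminus\{\sigma_k\}$ and the swap affects $\sigma_{k-1},\sigma_k$) uniformly. Once the $init$-monotonicity lemma under adjacent transpositions is established, both cases of (\ref{E:newshelling}) close quickly, and the standard argument of Björner (Theorem 2.3 in \cite{Bjolex}) for lexicographic shellings of $Sd(\partial\Delta^{k-1})$ carries over.
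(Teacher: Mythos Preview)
Your argument has a genuine gap. The position $j$ you pick --- the first index at which the prefix sets of $\pi$ and $\sigma$ disagree --- need not be a descent of $\sigma$, and when it is not the adjacent swap $\tau=\sigma\cdot(j,j{+}1)$ can land \emph{above} $\sigma$ in $<_I$. In Case~1 take $k=4$, $\pi=1234$ (so $init(\pi)=1$) and $\sigma=2314$ (so $init(\sigma)=3$): here $j=1$, the swap gives $\tau=3214$ with $init(\tau)=3=init(\sigma)$ and $\tau_1=3>2=\sigma_1$, hence $\tau>_I\sigma$. In Case~2 take $\pi=2413$ and $\sigma=3412$ (both have $init=4$ and $\pi<_L\sigma$): again $j=1$, $\tau=4312$, $init(\tau)=4$, and $\tau>_L\sigma$. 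You yourself flag that one ``should choose $j$ to be a descent position of $\sigma$'', but you never establish that a descent of $\sigma$ at which the prefix sets differ must exist, and your ``$init$-monotonicity under adjacent transpositions'' lemma cannot rescue the argument: $init(\tau)\le init(\sigma)$ does hold, but in the equality case you still need $\sigma_{j+1}<\sigma_j$, which is exactly what fails.

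The paper sidesteps this by working with descent positions from the outset rather than with the first prefix-set disagreement. In the case $init(\pi)<init(\sigma)=b$ it swaps at the position $i<b$ where the value $b$ sits in $\sigma$; since $i+1\le b$ forces $\sigma_{i+1}\in[b]\setminus\{b\}$, this $i$ is automatically a descent, so $\sigma'<_I\sigma$ follows either from a strict drop in $init$ or from the lexicographic tiebreaker. In the case $init(\pi)=init(\sigma)$ it selects the minimal $i\in D(\sigma)$ with $\{\pi_1,\ldots,\pi_i\}\ne\{\sigma_1,\ldots,\sigma_i\}$ and argues such $i$ exists from $\pi<_L\sigma$. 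The missing ingredient in your sketch is precisely an existence statement of this kind: once you know there is always some $i\in D(\sigma)$ with differing $i$-th prefix sets whenever $\pi<_I\sigma$, the swap at that $i$ gives the required $\tau$ immediately.
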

\begin{proof}
Recall that $\pi=\pi_1\pi_2\ldots \pi_k\in \mathbb{S}_k$ defines
the facet $F_\pi=\big\{\{\pi_1\},\{\pi_1,\pi_2\},
\ldots,[k]\setminus \{\pi_k\} \big\}$ of
$Sd(\partial\Delta^{k-1})$. We distinguish two cases.

 $1^\circ$
$\mathrm{init}(\pi)=a<b=\mathrm{init}(\sigma)$. In that case, we have that
$b=\sigma_i$ for some $i<b$ (otherwise $\mathrm{init}(\sigma)\neq b$), and
$\sigma_i=b>\sigma_{i+1}$. The permutation
$\sigma'=\sigma_1\ldots\sigma_{i-1}\sigma_{i+1}\,
b\,\,\sigma_{i+2} \ldots\sigma_k$ satisfies $\sigma'<_I\sigma$,
$F_\pi\cap F_\sigma\subset F_{\sigma'}\cap F_\sigma=
F_{\sigma}\setminus\{\sigma_1,\ldots,\sigma_i\}$, so
condition~\eqref{E:defshell} holds.

$2^\circ$ $\mathrm{init}(\pi) =\mathrm{init}(\sigma)$ and $\pi<_L \sigma$. We
consider (as in the lexicographical shelling) the descent set
$D(\sigma)=\{j:\sigma_j>\sigma_{j+1}\}$ and choose the minimal
$i\in D(\sigma)$ such that $\{\pi_1,\pi_2,\ldots,\pi_i\}\neq
\{\sigma_1,\sigma_2,\ldots,\sigma_i\}.$ Such $i$ has to exist
because $\pi<_L\sigma$. Then, we define
$\sigma'=\sigma_1\ldots\sigma_{i-1}\sigma_{i+1}\,\sigma_{i}\sigma_{i+2}
\ldots\sigma_k$, and obtain that $F_\pi\cap F_\sigma\subset
F_{\sigma'}\cap F_\sigma=
F_{\sigma}\setminus\{\sigma_1,\ldots,\sigma_i\}$.
\end{proof}
Although the shelling order $<_I$ differs from the standard
lexicographic order $<_L$ defined in Example 1, the type of each
facet remains the same in both orders. In fact, for every $\pi \in
\mathbb{S}_k$, we have $\mathrm{type}(F_\pi)=\mathrm{des}(\pi)$ with respect to
$<_I$ as well as to $<_L$. This is because the restriction of
$F_\pi$ depends only on the descent positions of $\pi$, which are
unaffected by the tie-breaking rule used in $<_I$.

 Let $\mathrm{Init}_k(t)$ denote the subcomplex of
$Sd(\partial\Delta^{k-1})$ spanned by the facets corresponding to
the permutations of $\mathbb{S}_k$ whose init is $t$. The
advantage of the shelling order $<_I$ (for our purposes) is that
it lists all of the facets from $\mathrm{Init}_k(1)$ first, then those from
$\mathrm{Init}_k(2)$, and so on.

We define a $k\times k$ matrix $H_k=\big [h^{k}_{i,d}\big]_{i,d\in
[k]}$, where each entry counts the number of permutations in
$\mathbb{S}_k$ with a prescribed number of descents and a given
initial part: $$ h^{k}_{i,d}=\big|\{\pi\in
\mathbb{S}_k:\mathrm{init}(\pi)=i, \mathrm{des}(\pi)=d-1\}\big| .$$

Thus, the numbers $h^k_{i,d}$ provide a joint refinement of the
two permutation statistics: the number of descents and the initial
part.

 The
column sums of $H_k$ are Eulerian numbers,
$h^k_{1,j}+h^k_{2,j}+\cdots+h^k_{k,j}=A(k,j-1)=
h_{j-1}(Sd(\partial \Delta^{k-1}))$. Let
$\mathbf{h}^k_t=(h^k_{t,1},\ldots,h^k_{t,k})\in \mathbb{N}_0^{k}$
  denote the $t$-th row of
 $H_k$. Note that
$$h^k_{t,1}+h^k_{t,2}+\cdots+h^k_{t,k}=
\big|\{\pi\in \mathbb{S}_{k}:\mathrm{init}(\pi)=t\}\big|=X_t(k-t)!.$$ We
can interpret $\mathbf{h}^k_t$ as the change in the
$\mathbf{h}$-vector of $Sd(\partial\Delta^{k-1})$ after adding all
the facets in the shelling order $<_I$ corresponding to
permutations whose initial part is $t$:
$$\mathbf{h}^k_t=\mathbf{h}\Big(\bigcup_{i=1}^t \mathrm{Init}(i)\Big)-
\mathbf{h}\Big(\bigcup_{i=1}^{t-1} \mathrm{Init}(i)\Big).$$ We also
describe the recursive relations for $\mathbf{h}^k_t$. Recall that
the convolution of two sequences is defined by
$(a_0,a_1,\ldots,a_s)*(b_0,b_1,\ldots,b_t)=
(c_0,c_1,\ldots,c_{s+t}),$ where
 $c_j=a_0b_j+a_1b_{j-1}+\cdots+a_jb_0.$
\begin{prop}
The sequences $\mathbf{h}^k_t$ satisfy the following recursive
relations:
\begin{itemize}
    \item[(i)] $\mathbf{h}^k_1=(\mathbf{h}(Sd(\partial\Delta^{k-2})),0);$
    \item[(ii)] For $1<t<k$ we have
    $\mathbf{h}^k_t=\mathbf{h}_t^t*\mathbf{h}(Sd(\partial\Delta^{k-t-1}));$
    \item[(iii)]$\mathbf{h}^k_k=
    \mathbf{h}(Sd(\partial\Delta^{k-1}))-\sum_{i=1}^{k-1}
 \mathbf{h}^k_i.$
\end{itemize}
\end{prop}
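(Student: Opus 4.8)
The plan is to prove the three recursions by analyzing how the faithful initial part interacts with the descent set, using the combinatorial interpretation $h^k_{t,d}=|\{\pi\in\mathbb S_k: init(\pi)=t,\ des(\pi)=d-1\}|$ together with the fact (established just above) that in the shelling $<_I$ one has $type(F_\pi)=des(\pi)$, so that $\mathbf h^k_t$ is exactly the vector of contributions to the $h$-vector coming from the block $Init_k(t)$.

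For (i): a permutation $\pi\in\mathbb S_k$ with $init(\pi)=1$ is precisely one with $\pi_1=1$. Such $\pi$ is determined by the permutation $\pi_2\pi_3\ldots\pi_k$ of $\{2,\ldots,k\}$, and since $\pi_1=1<\pi_2$ always, the position $1$ is never a descent; hence $des(\pi)=des(\pi_2\ldots\pi_k)$ where the latter is read as an element of $\mathbb S_{k-1}$ after order-preserving relabelling. So $h^k_{1,d}=A(k-1,d-1)$ for $d\le k-1$ and $h^k_{1,k}=0$, i.e. $\mathbf h^k_1=(\mathbf h(Sd(\partial\Delta^{k-2})),0)$ by Example \ref{Ex}. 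For (ii), with $1<t<k$: if $init(\pi)=t$ then $\{\pi_1,\ldots,\pi_t\}=[t]$ while no proper prefix is an initial segment, so the prefix $\pi_1\ldots\pi_t$ is a permutation of $[t]$ with faithful initial part exactly $t$, contributing a descent vector counted by $\mathbf h^t_t$; and the suffix $\pi_{t+1}\ldots\pi_k$ is an arbitrary permutation of $\{t+1,\ldots,k\}$, contributing (after relabelling) a descent vector counted by $\mathbf h(Sd(\partial\Delta^{k-t-1}))=(A(k-t,0),A(k-t,1),\ldots)$. The key point is that the descents of $\pi$ are exactly the descents internal to the prefix, the descents internal to the suffix, and the junction position $t$, which is \emph{always} a descent because $\pi_t\le t<t+1\le\pi_{t+1}$ is impossible --- wait, in fact $\pi_t\in[t]$ and $\pi_{t+1}>t$, so $\pi_t<\pi_{t+1}$ and position $t$ is \emph{never} a descent. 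Hence $des(\pi)=des(\text{prefix})+des(\text{suffix})$ with no shift, and summing over the split gives $\mathbf h^k_t=\mathbf h^t_t*\mathbf h(Sd(\partial\Delta^{k-t-1}))$ by the definition of convolution. For (iii), since $\mathbb S_k=\bigsqcup_{t=1}^k Init_k(t)$ and the $h$-vector interpretation is additive over the shelling blocks, $\sum_{t=1}^k\mathbf h^k_t=\mathbf h(Sd(\partial\Delta^{k-1}))$, which rearranges to the claimed identity.

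The main obstacle I anticipate is getting the off-by-one bookkeeping exactly right: I must be careful that $des$ is defined as the number of descent \emph{positions} while the $h$-vector is indexed from $0$, and that the junction position contributes $0$ to the descent count (not $1$), so that the convolution in (ii) is a plain convolution with no degree shift --- this is the one place where a sign or index error would break the recursion, and I would double-check it on the small case $k=3$, $t=2$ before committing to the statement.
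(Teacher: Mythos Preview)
Your proposal is correct and follows essentially the same approach as the paper: split a permutation with $init(\pi)=t$ into its prefix on $[t]$ (counted by $\mathbf{h}^t_t$) and its suffix on $\{t+1,\ldots,k\}$ (counted by the Eulerian distribution on $\mathbb{S}_{k-t}$), and observe that descents add without interaction across the junction. The paper's proof is terser---it declares (i) and (iii) trivial and for (ii) simply asserts the prefix/suffix decomposition---whereas you make explicit the crucial point that position $t$ is never a descent (since $\pi_t\le t<\pi_{t+1}$), which is exactly what justifies the plain convolution with no shift; this extra care is warranted and your small-case check is a sensible safeguard.
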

\begin{proof}
Statements $(i)$ and $(iii)$ follow directly from the definitions.
To prove $(ii)$, we have to count the number of the descents in
the permutations from $\mathbb{S}_k$ whose initial part is exactly
$t$. The first $t$ elements of such a permutation form a
permutation from $\mathbb{S}_t$, and their descents are counted by
the entries of $\mathbf{h}_t^t$. The remaining $k-t$ elements form
a permutation of $[k]\setminus [t]$, which has the same descent
distribution as the corresponding permutation in
$\mathbb{S}_{k-t}$. Convolution then combines these two
contributions, yielding the stated recursion.
\end{proof}
\begin{thm}\label{T:h-cluster}
If all vertices of $F=F(\mathbf{v},Id)$ lie in the interior of
$R_{k,q}$, then the entries of $\mathbf{h}$-vector of
$SC_{T_{k,q}}(F)$ are given by
$$
h_j(SC_{T_{k,q}}(F))=\sum_{\substack{\pi\in
\mathbb{S}_k\\\mathrm{des}(\pi)=j}}\mathrm{init}(\pi)=\sum_{i=1}^{k}ih^{k}_{i,j} .$$

\end{thm}In other words
$$\mathbf{h}(SC_{T_{k,q}}(F))=(1,2,\ldots,k)\cdot H_k=
\sum_{t=1}^k t\cdot\mathbf{h}^k_t=k\cdot
 \mathbf{h}(Sd(\partial\Delta^{k-1}))-\sum_{t=1}^{k-1}
(k-t)\cdot\mathbf{h}^k_t.$$ This formula expresses the
$\mathbf{h}$-vector of the star cluster as a weighted sum of the
contributions from each initial part, with later terms adjusted to
account for overlaps in the shelling order.
\begin{proof}
We will describe a concrete shelling of $SC_{T_{k,q}}(F)$ using
the decomposition of $\mathcal{F}$ described in
(\ref{E:disjointunion}), along with the shelling order $<_I$ of
$Sd(\partial\Delta^{k-1}) $ defined by (\ref{E:newshelling}).
Recall that the facets of $\mathcal{F}_i$ correspond to elements
of $\mathbb{S}_k$. First, we list all the facets from
$\mathcal{F}_1$ as the permutations from $\mathbb{S}_k$, ordered
according to $<_I$.
   For all $j=2,3,\ldots,k$, we continue the
    shelling of $SC_{T_{k,q}}(F)$ with the
    facets from $\mathcal{F}_j$ that not
    appear earlier in some $\mathcal{F}_i$ for $i<j$.
   Note that the map
   $$\pi_1\pi_2\ldots\pi_k \mapsto (\pi_k+j) (\pi_{k-1}+j)
   \ldots (\pi_1+j)\hspace{.6cm}\textrm{  (the sum is modulo $k$)}$$
   is a bijection between the facets from
   $\mathcal{F}_j\cap (\mathcal{F}_1\cup
    \mathcal{F}_2\cup \cdots \cup \mathcal{F}_{j-1})$ and
    permutations of $\mathbb{S}_k$ whose initial part
    is at most $j-1$  (here we identify
    $\mathcal{F}_j$ with $\mathbb{S}_k$). These
facets form the initial part of the shelling of
$Sd(\partial\Delta^{k-1})$ described in Proposition
\ref{P:myshel}, and they have already been listed earlier. We
continue the shelling of $SC_{T_{k,q}}(F)$ by successively adding
the new facets from
    $\mathcal{F}_j\setminus \big(
    \mathcal{F}_1\cup \cdots \cup \mathcal{F}_{j-1}\big)$
    following the order $<_I$, with $\mathcal{F}_j$ identified with $\mathbb{S}_k$.
Finally, the type of the facet defined by $\pi$ is $\mathrm{des}
(\pi)$, and the facets determined by $\pi$ appear exactly
$\mathrm{init} (\pi)$ times across all $\mathcal{F}_i$, $i\in
[k]$.
\end{proof}
Therefore, the entries of $\mathbf{h}\big(SC_{T_{k,q}}(F)\big)$
count descents in permutations from $\mathbb{S}_k$, with each
permutation contributing with multiplicity equal to its initial
part $\mathrm{init}(\pi)$. Although the facets of
$SC_{T_{k,q}}(F)$ (see Theorem~\ref{T:facetsareinit}) can be
identified with those of $\mathrm{Init}(k+1) \subset
Sd(\partial\Delta^{k})$, the two complexes have different
$\mathbf{h}$-vectors.

\section{Shelling of $T_{k,q}$}\label{S:shell}
We describe a concrete shelling of $T_{k,q}$ using the notation
explained in Proposition \ref{P:facetsarestrings}. Recall that
$\mathbf{a}=(a_1,a_2,\ldots,a_{k-1}) \in \mathcal{S}_{k,q}$ define
the facet
$F(\mathbf{a})=\big\{\mathbf{v}_a^{(1)},\mathbf{v}_a^{(2)},
\ldots,\mathbf{v}_a^{(k)}\big\}$, as described in
(\ref{E:vertices}). For a fixed $\mathbf{a}\in \mathcal{S}_{k,q}$,
there are at most $k$ facets of $T_{k,q}$ that share a common
ridge with $F(\mathbf{a})$, and we list all of these
possibilities, see Remark \ref{R:star}.
 \begin{equation}\label{E:izbacizadnjetjeme}
 \textrm{If }a_1>0,\textrm{ then }
F(\mathbf{a})\setminus \big\{\mathbf{v}_a^{(k)}\big\}=
F(\mathbf{a})\cap F(\mathbf{b}),\textrm{ for
}\mathbf{b}=(a_2,\ldots,a_{k-1},a_1-1).
\end{equation}
 \begin{equation}\label{E:izbaciprvotjeme} \textrm{If }
  a_{k-1}<q,\textrm{ then }
F(\mathbf{a})\setminus \big\{\mathbf{v}_a^{(1)}\big\}=
F(\mathbf{a})\cap F(\overline{\mathbf{b}}),\textrm{ for
}\overline{\mathbf{b}}=(a_{k-1}+1,a_1,\ldots,a_{k-2}).
\end{equation}

If $a_{i}\neq
     a_{i+1}$, for some $i\in[k-2]$, then
 \begin{equation}\label{E:izbacisrednje}
 F(\mathbf{a})\setminus \big\{\mathbf{v}_a^{(k-i)}\big\}=
  F(\mathbf{a})\cap
F(\mathbf{b}^{(i)}), \textrm{ where }
\mathbf{b}^{(i)}=(a_1,\ldots,a_{i+1},a_{i},\ldots,a_{k-1}).
\end{equation}

 For $\mathbf{a}=(a_1,a_2,\ldots,a_{k-1})\in
\mathcal{S}_{k,q}$, we define
 the\textit{ maximal coordinate} of $\mathbf{a}$ as $m(\mathbf{a})=
    max\{a_1,\ldots,a_{k-1}\}$ and
  the\textit{ total sum} of the coordinates
    $S(\mathbf{a})=\sum_{i=1}^{k-1}a_i$.

 We use the above parameters to define
 a linear order on the facets of $T_{k,q}$
$$F(\mathbf{a}) <F(\mathbf{b})\Longleftrightarrow \left\{%
\begin{array}{ll}
    m(\mathbf{a})<m(\mathbf{b}), & \hbox{or;} \\
    m(\mathbf{a})=m(\mathbf{b})\textrm{ and }
S(\mathbf{a}) <S(\mathbf{b}), & \hbox{or;} \\
   m(\mathbf{a})=m(\mathbf{b}), S(\mathbf{a})=S(\mathbf{b}), a_i>b_i,
    a_j=b_j\textrm{ for all }j<i.
   & \hbox{} \\
\end{array}%
\right.    $$
\begin{thm}
The above defined linear order $<$ is a shelling order for
$T_{k,q}$.

\end{thm}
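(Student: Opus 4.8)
The plan is to identify, for each facet $F(\mathbf b)$, its restriction face $\mathcal R(F(\mathbf b))$ relative to the order $<$, and then to prove that $F(\mathbf b)$ is the $<$-smallest facet of $T_{k,q}$ that contains $\mathcal R(F(\mathbf b))$; the shelling condition (\ref{E:defshell}) then follows immediately (purity of $T_{k,q}$ being clear, it being a triangulation of a simplex). The first step is to note that for a facet $F(\mathbf a)$ one has $S(\mathbf a)=\sum_i(\mathbf v_a)_i$ and $m(\mathbf a)=\max_i(\mathbf v_a)_i$, so both parameters depend only on the first vertex $\mathbf v_a$. Feeding this into the three neighbour rules (\ref{E:izbacizadnjetjeme})--(\ref{E:izbacisrednje}): the neighbour across $\mathbf v_a^{(k)}$ strictly decreases $S$ without raising $m$, hence \emph{precedes} $F(\mathbf a)$; the neighbour across $\mathbf v_a^{(1)}$ strictly increases $S$ without lowering $m$, hence \emph{follows} it; and the neighbour across $\mathbf v_a^{(k-i)}$ has the same $m$ and $S$ and differs from $\mathbf a$ first in coordinate $i$, so by the third (reversed-lexicographic) clause of the definition of $<$ it precedes $F(\mathbf a)$ exactly when $a_i<a_{i+1}$ and follows it when $a_i>a_{i+1}$. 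Since (\ref{E:izbacizadnjetjeme})--(\ref{E:izbacisrednje}) exhausts the interior ridges and $T_{k,q}$ is a triangulation of the ball $R_{k,q}$, any ridge whose corresponding hypothesis fails lies on $\partial R_{k,q}$. Hence
$$\mathcal R(F(\mathbf b))=\{\mathbf v_b^{(k)}: b_1>0\}\cup\{\mathbf v_b^{(k-i)}: 1\le i\le k-2,\ b_i<b_{i+1}\},$$
the ``ascent vertices'' of $F(\mathbf b)$ together with its top vertex when $b_1>0$.

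Next I would show that any facet $F(\mathbf a)$ with $\mathcal R(F(\mathbf b))\subseteq F(\mathbf a)$ satisfies $F(\mathbf b)\le F(\mathbf a)$. Every vertex of $F(\mathbf a)$ lies (coordinate-wise) between $\mathbf v_a$ and $\mathbf v_a+\mathbbm 1$, so for each $\mathbf w\in\mathcal R(F(\mathbf b))$ we get $\mathbf v_a\le\mathbf w$ and $\mathbf v_a+\mathbbm 1\ge\mathbf w$; together with $\mathbf v_a\ge\mathbf 0$ this forces $\mathbf v_a\ge\max\!\big(\max(\mathcal R(F(\mathbf b)))-\mathbbm 1,\ \mathbf 0\big)$. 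A short computation of $\max(\mathcal R(F(\mathbf b)))$ — it equals $\mathbf v_b+\mathbbm 1$ when $b_1>0$, and equals $\mathbf v_b^{(k-\ell)}$, where $\ell$ is the length of the first weakly decreasing run of $\mathbf b$, when $b_1=0$; in that case this vector agrees with $\mathbf v_b$ in its first $\ell$ (zero) coordinates and exceeds it by $1$ in the others — shows that the right-hand side equals $\mathbf v_b$. Thus $\mathbf v_a\ge\mathbf v_b$ coordinate-wise, so $S(\mathbf a)\ge S(\mathbf b)$ and $m(\mathbf a)\ge m(\mathbf b)$. If either inequality is strict, $F(\mathbf b)<F(\mathbf a)$; otherwise $\sum\mathbf v_a=\sum\mathbf v_b$ with $\mathbf v_a\ge\mathbf v_b$, hence $\mathbf v_a=\mathbf v_b$, and we are reduced to comparing two facets with the same first vertex.

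For that remaining case, set $\mathbf v:=\mathbf v_a=\mathbf v_b$ and use that a facet through $\mathbf v$ is a maximal chain in the cube $[\mathbf v,\mathbf v+\mathbbm 1]$, that $F(\mathbf c)$ increments the coordinates of $\mathbf v$ in the order obtained by reading the string $\mathbf c$ from right to left, and that the vertex $\mathbf v_b^{(j)}=\mathbf v+\mathbbm 1_{A_j}$ belongs to $F(\mathbf a)$ exactly when the first $j-1$ coordinates incremented by $F(\mathbf a)$ form the set $A_j$. Applying this to the vertices of $\mathcal R(F(\mathbf b))$ — which, by the description above, occur precisely at the boundaries between the weakly decreasing runs $B_1B_2\cdots B_m$ of $\mathbf b$ — shows that $\mathbf a$ is obtained from $\mathbf b$ by rearranging, inside each run-block $B_\iota$, its entries (the admissible rearrangements being further limited by the consistency requirement, which does not affect the conclusion). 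Since each block $B_\iota$ of $\mathbf b$ is in weakly decreasing, hence reversed-lexicographically largest, order, at the first block where $\mathbf a\ne\mathbf b$ and the first position $i$ there at which they differ one has $a_i<b_i$; as $\mathbf a$ and $\mathbf b$ agree before position $i$, this yields $F(\mathbf b)<F(\mathbf a)$, as required.

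Combining these: if $i<j$ in the order, write $F_i=F(\mathbf a)$, $F_j=F(\mathbf b)$. By the above, $\mathcal R(F(\mathbf b))\not\subseteq F(\mathbf a)$, so there is a vertex $v$ of $F(\mathbf b)$ with $v\notin F(\mathbf a)$ and $v\in\mathcal R(F(\mathbf b))$; by the first step the ridge $F(\mathbf b)\setminus\{v\}$ is shared with some facet $F_l$ with $l<j$, and $F(\mathbf a)\cap F(\mathbf b)\subseteq F(\mathbf b)\setminus\{v\}=F_l\cap F(\mathbf b)$, which is exactly (\ref{E:defshell}). The main obstacle is the equal-first-vertex step: converting ``$\mathcal R(F(\mathbf b))\subseteq F(\mathbf a)$'' into the clean statement ``$\mathbf a$ permutes $\mathbf b$ within its weakly decreasing runs'' requires careful tracking of the relationship between the vertices of $\mathcal R(F(\mathbf b))$, the coordinate-increment order of a facet, and the string $\mathbf a$, together with the bookkeeping forced by repeated coordinates of $\mathbf v$ (equal coordinates must be incremented in a prescribed order). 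The computation of $\max(\mathcal R(F(\mathbf b)))$ in the middle step is routine once the description of the restriction face is available.
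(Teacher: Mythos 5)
Your proof is correct, but it takes a genuinely different route from the paper's. The paper verifies the shelling condition (\ref{E:defshell}) pairwise: given $F(\mathbf a)<F(\mathbf b)$ it splits into the three cases of the definition of $<$ (smaller $m$; equal $m$ and smaller $S$; equal $m,S$ and reversed-lexicographic comparison, further subdivided according to whether $\mathbf v^{(1)}_a=\mathbf v^{(1)}_b$ and whether $b_1>0$), and in each case exhibits an explicit earlier neighbour $F(\overline{\mathbf b})$ together with a vertex of $F(\mathbf b)$ avoided by $F(\mathbf a)$; the formula for the restriction is only read off afterwards. You instead make the restriction face the primary object: you extract the candidate set (the ascent vertices, plus $\mathbf v_b^{(k)}$ when $b_1>0$) from the adjacency rules (\ref{E:izbacizadnjetjeme})--(\ref{E:izbacisrednje}) and the order comparisons, and then prove the key minimality lemma that any facet containing this set is $\ge F(\mathbf b)$: the coordinatewise bound $\mathbf v_a\ge\max\bigl(\max\mathcal R(F(\mathbf b))-\mathbbm 1,\mathbf 0\bigr)=\mathbf v_b$ (which is right, using that when $b_1=0$ the initial zeros occupy the first coordinates of $\mathbf v_b$ by the tie-breaking rule) reduces to the equal-first-vertex case, and there the containment of the ascent vertices forces, via the suffix-set argument, that $\mathbf a$ rearranges $\mathbf b$ within its weakly decreasing runs, so $b_i>a_i$ at the first difference and $F(\mathbf b)<F(\mathbf a)$. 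Both of these delicate steps do go through as sketched, and combined with purity they yield (\ref{E:defshell}) exactly as you say. Your organization buys a cleaner logical structure (one lemma per facet rather than a case analysis per pair) and builds the description of $\mathcal R(F(\mathbf a))$, hence Corollary \ref{C:comb}, directly into the proof; the paper's pairwise construction of $\overline{\mathbf b}$ is more elementary and avoids the minimality lemma. Like the paper, you take the completeness of the adjacency list (i)--(iii) for granted, and the degenerate facet $\mathbf b=\mathbf 0$ (empty restriction, the unique $<$-minimum) is harmless in your argument.
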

\begin{proof}
 Assume that $F(\mathbf{a})$ precedes $F(\mathbf{b})$ in our
order. We now consider the following three cases and verify that
(\ref{E:defshell}) is satisfied.

$1^\circ$ Assume that $m(\mathbf{a})<m(\mathbf{b})=m$. In that
case, we are looking for the largest $i>1$ such that
$b_i=m>b_{i-1}$. If such $i$ exists, we swap $b_{i-1}$ and $b_i$,
and let
$\overline{\mathbf{b}}=(b_1,\ldots,m,b_{i-1},\ldots,b_{k-1})$.
Note that $F(\overline{\mathbf{b}})<F(\mathbf{b})$, and the vertex
$\mathbf{v}_b^{(k+1-i)}$ of $F(\mathbf{b})$ is not contained in
$F(\mathbf{a})$, because its $(k+1-i)$-th coordinate is $m+1$.
From (\ref{E:izbacisrednje}) we obtain that $F(\mathbf{a})\cap
F(\mathbf{b})\subseteq F(\mathbf{b})\cap
F(\overline{\mathbf{b}})=F(\mathbf{b})\setminus
\{\mathbf{v}_b^{(k+1-i)}\}.$
 If there is no such $i$, then $b_1=m$ and we let
$\overline{\mathbf{b}}=(b_2,b_3,\ldots,b_{k-1},m-1)$. Note that
$F(\overline{\mathbf{b}})<F(\mathbf{b})$, and from
(\ref{E:izbacizadnjetjeme}) it follows that $F(\mathbf{a})\cap
F(\mathbf{b})\subseteq F(\mathbf{b})\cap
F(\overline{\mathbf{b}})=F(\mathbf{b})
\setminus\{\mathbf{v}_b^{(k)}\}.$

 $2^\circ$
Assume that $m(\mathbf{a})=m(\mathbf{b})$ and
$S(\mathbf{a})<S(\mathbf{b})$. In that case, we know that the last
vertex $\mathbf{v}_b^{(k)}$ of $F(\mathbf{b})$ is not contained in
$F(\mathbf{a})$. If $b_1>0$, for
$\overline{\mathbf{b}}=(b_2,b_3,\ldots,b_{k-1},b_1-1)$ we have
that $F(\overline{\mathbf{b}})<F(\mathbf{b})$, and from
(\ref{E:izbacizadnjetjeme}) follows that $F(\mathbf{a})\cap
F(\mathbf{b})\subseteq F(\mathbf{b})\cap
F(\overline{\mathbf{b}})=F(\mathbf{b})\setminus
\{\mathbf{v}_b^{(k)}\}$. If $b_1=b_2=\cdots=b_{i-1}=0$, $b_i>0$,
then the vertex $\mathbf{v}_b^{(k+1-i)}$ of $F(\mathbf{b})$ is not
contained in $F(\mathbf{a})$. Indeed, if
$\mathbf{v}_b^{(k+1-i)}\in F(\mathbf{a})$, all of its $(i-1)$
first coordinates (they are equal to $0$) have to increase.
Therefore, we obtain that $S(\mathbf{a})\geq S(\mathbf{b})$, which
is a contradiction. In that case, for
$\overline{\mathbf{b}}=(\underbrace{0,\ldots,0,}
_{(i-2)\textrm{-zero's}}b_{i},0\ldots,b_{k-1})$ we have that
\mbox{$F(\overline{\mathbf{b}})<F(\mathbf{b})$,} and from
(\ref{E:izbacisrednje}) follows $F(\mathbf{a})\cap
F(\mathbf{b})\subseteq F(\mathbf{b})\cap
F(\overline{\mathbf{b}})=F(\mathbf{b})
\setminus\{\mathbf{v}_b^{(k+1-i)}\}$.

$3^\circ$ Assume that $m(\mathbf{a})=m(\mathbf{b})$,
$S(\mathbf{a})=S(\mathbf{b})$, $a_j=b_j$ for all $j<i$ and
$a_i>b_i$. If the first vertices of $F(\mathbf{a})$ and
$F(\mathbf{b})$ are equal, i.e.,
$\mathbf{v}^{(1)}_a=\mathbf{v}^{(1)}_b$, then
$\{b_{i+1},\ldots,b_{k-1}\}=
\left(\{a_{i+1},\ldots,a_{k-1}\}\cup\{a_i\}\right)
\setminus\{b_i\}.$ In that case, we are looking for the smallest
$s\geq i$ such that $b_s<b_{s+1}$. It has to exist because
$a_i>b_i$, and $a_i$ appears after $b_i$ in $\mathbf{b}$. Also,
$a_i$ appears in $\mathbf{b}$ after $b_s$, and all elements
between $b_i$ and $b_s$ are smaller than $a_i$. Then we let
$\overline{\mathbf{b}}=(b_1,\ldots,b_{s+1},b_s, \ldots,b_{k-1})$,
for which $F(\overline{\mathbf{b}})<F(\mathbf{b}) \textrm{  and
}F(\mathbf{a})\cap F(\mathbf{b})\subseteq F(\mathbf{b})\cap
F(\overline{\mathbf{b}})=F(\mathbf{b})\setminus
\{\mathbf{v}_b^{(k+1-s)}\}$ hold.

\noindent If $\mathbf{v}^{(1)}_a\neq \mathbf{v}^{(1)}_b$ (the
first and the last vertices of $F(\mathbf{a})$
 and $F(\mathbf{b})$ are different), and if $b_1>0$, we let
 $\overline{\mathbf{b}}=(b_2,b_3,\ldots,b_{k-1},b_1-1)$.
 In that case, we
 have that
$$  F(\overline{\mathbf{b}})<F(\mathbf{b})\textrm{  and }
 F(\mathbf{a})\cap F(\mathbf{b})\subseteq F(\mathbf{b})\cap
F(\overline{\mathbf{b}})=F(\mathbf{b})
\setminus\{\mathbf{v}_b^{(k)}\}.$$ If $b_1=b_2=\cdots=b_{i-1}=0$,
$b_i>0$ then we let
$\overline{\mathbf{b}}=(\underbrace{0,\ldots,0,}
_{(i-2)-\textrm{zero's}}b_{i},0\ldots,b_{k-1})$. In that case we
have $F(\overline{\mathbf{b}})<F(\mathbf{b})\textrm{ and }
F(\mathbf{a})\cap F(\mathbf{b})\subseteq F(\mathbf{b})\cap
F(\overline{\mathbf{b}})=F(\mathbf{b})
\setminus\{\mathbf{v}_b^{(k+1-i)}\}.$ The argument why
$\mathbf{v}_b^{(k+1-i)}\notin F(\mathbf{a})$ is the same as in the
previous case.

\end{proof}
 From the above theorem and relations
(\ref{E:izbacizadnjetjeme})$-$(\ref{E:izbacisrednje}), we conclude
that the restriction of the facet $F(\mathbf{a})$ in the shelling
order $<$ is $\mathcal{R}(F(\mathbf{a}))=
\big\{\mathbf{v}^{(k+1-i)}_a:a_{i-1}< a_{i}\big\}$, where we
assume that $a_{0}=0$. Therefore, we obtain a nice combinatorial
interpretation for the entries of
    $h$-vector of $T_{k,q}$.
\begin{cor}\label{C:comb}
The entries of $\mathbf{h}(T_{k,q})=(h_0,h_1,\ldots,h_{k-1})$
enumerate the sequences $(0,a_1, \allowbreak a_2,\ldots,a_{k-1})$
in $\{0\}\times \mathcal{S}_{k,q}$ with exactly $i$ strict
ascents. In other words,
\begin{equation}\label{E:h-vectcomb}
h_i = \Bigl|\bigl\{ (0,a_1,\ldots,a_{k-1}) \in \{0\} \times
\mathcal{S}_{k,q} : |\{ j : a_{j-1} < a_j \}| = i \bigr\}\Bigr|.
\end{equation}
\end{cor}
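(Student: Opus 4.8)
The plan is to read the corollary off from three facts now in hand: for a shellable pure complex the $i$-th entry of the $h$-vector counts the facets of type $i$ (recalled in the Preliminaries); the order $<$ just proved to be a shelling of $T_{k,q}$, together with the list of ridge-neighbours in (\ref{E:izbacizadnjetjeme})--(\ref{E:izbacisrednje}); and the bijection $F(\mathbf{a})\leftrightarrow\mathbf{a}$ between facets of $T_{k,q}$ and $\mathcal{S}_{k,q}$ from Proposition \ref{P:facetsarestrings}. The only real content is the identity $type(F(\mathbf{a}))=|\{i\in[k-1]:a_{i-1}<a_i\}|$ (with the convention $a_0=0$), i.e. the formula for $\mathcal{R}(F(\mathbf{a}))$ displayed above.

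To establish that formula I would run through the $k$ ridges of $F(\mathbf{a})$ one by one: the ridge $F(\mathbf{a})\setminus\{\mathbf{v}_a^{(\ell)}\}$ obtained by deleting the vertex $\mathbf{v}_a^{(\ell)}$, for $\ell\in[k]$. Since (\ref{E:izbacizadnjetjeme})--(\ref{E:izbacisrednje}) list \emph{all} facets sharing a ridge with $F(\mathbf{a})$ (the cases ``delete the last vertex'', ``delete the first vertex'', ``delete a middle vertex''), it suffices to decide for each neighbour whether it precedes $F(\mathbf{a})$ in $<$. Deleting $\mathbf{v}_a^{(k)}$ leaves a ridge contained in $F(\mathbf{b})$, $\mathbf{b}=(a_2,\ldots,a_{k-1},a_1-1)$, exactly when $a_1>0$; here $m(\mathbf{b})\le m(\mathbf{a})$ and $S(\mathbf{b})=S(\mathbf{a})-1$, so $F(\mathbf{b})$ always precedes, and if $a_1=0$ the ridge lies on $\partial R_{k,q}$. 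Deleting a middle vertex $\mathbf{v}_a^{(k-i)}$ ($i\in[k-2]$) leaves a ridge contained in $F(\mathbf{b}^{(i)})$, obtained from $\mathbf{a}$ by transposing $a_i$ and $a_{i+1}$, exactly when $a_i\ne a_{i+1}$; here $m$ and $S$ are unchanged, so the reversed-lexicographic tiebreaker of $<$ puts $F(\mathbf{b}^{(i)})$ before $F(\mathbf{a})$ precisely when $a_i<a_{i+1}$. Deleting the first vertex $\mathbf{v}_a^{(1)}$ leaves a ridge that either lies on $\partial R_{k,q}$ or is shared with $F(\overline{\mathbf{b}})$, $\overline{\mathbf{b}}=(a_{k-1}+1,a_1,\ldots,a_{k-2})$, which has strictly larger $m$ or strictly larger $S$ and hence always \emph{follows} $F(\mathbf{a})$; so $\mathbf{v}_a^{(1)}$ is never in $\mathcal{R}(F(\mathbf{a}))$ --- consistent with $T_{k,q}$ being a $(k-1)$-ball. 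Writing $\ell=k+1-i$, these three computations say exactly that $\mathbf{v}_a^{(k+1-i)}\in\mathcal{R}(F(\mathbf{a}))$ iff $a_{i-1}<a_i$ (with $a_0=0$), so $type(F(\mathbf{a}))$ is the number of ascents of $(0,a_1,\ldots,a_{k-1})$.

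The corollary is then formal: by shellability $h_i(T_{k,q})$ is the number of facets of type $i$, and transporting this count along the bijection of Proposition \ref{P:facetsarestrings} turns it into the number of $\mathbf{a}\in\mathcal{S}_{k,q}$ for which $(0,a_1,\ldots,a_{k-1})$ has exactly $i$ ascents, which is (\ref{E:h-vectcomb}). I expect the book-keeping in the middle paragraph to be the only delicate point --- keeping the boundary cases straight (when $a_1=0$, when $a_i=a_{i+1}$, when the candidate neighbour would step outside $\mathcal{S}_{k,q}$ because $a_{k-1}$ is already maximal) and consistently aligning the index $\ell$ of the deleted vertex with the string position $i=k+1-\ell$; once $\mathcal{R}(F(\mathbf{a}))$ is pinned down, nothing remains.
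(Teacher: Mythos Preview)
Your proposal is correct and follows exactly the paper's route: the paper simply asserts, immediately before the corollary, that $\mathcal{R}(F(\mathbf{a}))=\{\mathbf{v}_a^{(k+1-i)}:a_{i-1}<a_i\}$ as a consequence of the shelling theorem and relations (\ref{E:izbacizadnjetjeme})--(\ref{E:izbacisrednje}), and the corollary is then immediate. Your case-by-case check of which ridge-neighbours precede $F(\mathbf{a})$ in the order $<$ is precisely the verification the paper suppresses, and your bookkeeping is accurate.
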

In sharp contrast, while the $h$-vector of the barycentric
subdivision of $\Delta^{k-1}$ counts the descents in all
permutations from $\mathbb{S}_k$, the $h$-vector of the edgewise
triangulation $T_{k,q}$ counts the strict ascents in all sequences
from $\{0\}\times \mathcal{S}_{k,q}$.

\noindent Some values of $h$-vector are easy to obtain by a simple
combinatorial reasoning. For example, we have that
$$h_1(T_{k,q})={k+q-1\choose k-1}-1,
h_{k-1}(T_{k,q})={q-1\choose k-1},h_i(T_{k,2})={k\choose 2i}.$$
\noindent Now, we use Corollary \ref{C:comb} to determine the
exact values for all $h_{i}(T_{k,q})$.
 Let $a_{j,e}^{(t)}$ denote the number of
sequences $(0,a_1,\ldots,a_{t-2},j)$, $a_i\in\{0,1,\ldots,q-1\}$
that have exactly $e$ ascents. Obviously, from
(\ref{E:h-vectcomb}) we have that
\begin{equation}\label{E:hviaa}
h_i(T_{k,q})=\sum_{j=0}^{q-1}a^{(k)}_{j,i}.
\end{equation} The numbers
$a_{j,e}^{(t)}$ satisfy the following recursive relation
\begin{equation}\label{E:recrel}
a_{j,e}^{(t)}=\sum_{p=0}^{j-1}a_{p,e-1}^{(t-1)}+
\sum_{p=j}^{q-1}a_{p,e}^{(t-1)}.
\end{equation}

Note that the numbers $a_{j,e}^{(t)}$ can be recognized as the
coefficients of the polynomial
$P_{t-1,q}(x)=\big(1+x+x^2+\cdots+x^{q-1}\big)^{t-1}$. By
considering
\begin{equation}\label{E;p}
P_{t,q}(x)=P_{t-1,q}(x)(1+x+x^2+\cdots+x^{q-1}),
\end{equation}
 we conclude that the coefficients
of the polynomials $\big(P_{t,q}(x)\big)_{t\in \mathbb{N}}$ also
satisfy the recursive relations (\ref{E:recrel}). Therefore, we
obtain that
\begin{equation}\label{:E:Polynomial}
P_{t,q}(x)=a^{(t+1)}_{0,0}+\sum_{e=1}
^{\left\lfloor\frac{t(q-1)}{q}\right\rfloor} \sum_{j=0}
^{q-1}a^{(t+1)}_{j,e}x^{eq-j}.
\end{equation}

\noindent From (\ref{E:hviaa}) and (\ref{:E:Polynomial}), we
conclude that $h_i$ is the sum of the coefficients of
$x^{(i-1)q+1}$, $x^{(i-1)q+2}$, $\ldots,x^{iq}$ in $P_{k-1,q}(x)$.
A similar result is obtained in \cite{Ata} (see Theorem 1.1)
algebraically.
 By (\ref{E;p}), the above sum can be
recognized as the coefficient of $x^{iq}$ in $P_{k,q}(x)$, and
after evaluating
$\big(1+x+\cdots+x^{q-1}\big)^k=\frac{(1-x^q)^k}{(1-x)^k}$, we
obtain the exact values for the entries of $h$-vector of
$T_{k,q}$.
\begin{thm} The entries of $h$-vector of
triangulation $T_{k,q}$ are given by the formula
\begin{equation}\label{E:hvector}
h_i(T_{k,q})=\sum_{j=0}^i(-1)^j {k\choose j}{(i-j)q+k-1\choose
k-1}.
\end{equation}
\end{thm}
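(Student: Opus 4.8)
The proof is a short coefficient extraction, since all the substantive work has been done in the paragraphs preceding the statement. Recall that by Corollary~\ref{C:comb}, the recursion~(\ref{E:recrel}), the identification of the numbers $a^{(t)}_{j,e}$ with the coefficients of the polynomials $P_{t-1,q}(x)=(1+x+\cdots+x^{q-1})^{t-1}$, and the telescoping step based on~(\ref{E;p}), one already knows that
$$h_i(T_{k,q})=[x^{iq}]\,P_{k,q}(x)=[x^{iq}]\,\frac{(1-x^q)^k}{(1-x)^k}.$$
So the plan is simply to expand the right-hand side and read off the coefficient of $x^{iq}$.

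First I would expand the numerator by the binomial theorem, $(1-x^q)^k=\sum_{j=0}^{k}(-1)^j\binom{k}{j}x^{qj}$, and the denominator by the generalized binomial series, $(1-x)^{-k}=\sum_{m\ge 0}\binom{m+k-1}{k-1}x^m$; both are valid identities of formal power series. Multiplying the two series, a term $x^{qj}\cdot x^m$ contributes to the coefficient of $x^{iq}$ precisely when $m=(i-j)q$, which forces $0\le j\le i$ (the constraint $j\le k$ is automatic because $\binom{k}{j}=0$ for $j>k$). Substituting $m=(i-j)q$ into $\binom{m+k-1}{k-1}$ yields
$$h_i(T_{k,q})=\sum_{j=0}^{i}(-1)^j\binom{k}{j}\binom{(i-j)q+k-1}{k-1},$$
which is exactly formula~(\ref{E:hvector}).

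I do not expect a genuine obstacle here: all the combinatorics sits in Corollary~\ref{C:comb} and in the identification of $h_i$ with a coefficient of $P_{k,q}(x)$, both already established, and what remains is only bookkeeping of exponents. The two points worth a line of care are that the equation $m=(i-j)q$ truncates the summation at $j=i$, so no monomial of negative degree intervenes, and that working through the infinite series $(1-x)^{-k}$ is legitimate because $P_{k,q}(x)$ is a genuine polynomial whose every coefficient is extracted by a finite sub-sum. As consistency checks, one may specialize $q=2$ to get $P_{k,2}(x)=(1+x)^k$ and hence $h_i(T_{k,2})=\binom{k}{2i}$, and one may sum~(\ref{E:hvector}) over all $i$, recovering $q^{k-1}$ (the number of facets of $T_{k,q}$) by a roots-of-unity filter.
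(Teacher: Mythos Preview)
Your proposal is correct and follows exactly the paper's approach: the paper states that $h_i$ is the coefficient of $x^{iq}$ in $(1+x+\cdots+x^{q-1})^k=(1-x^q)^k/(1-x)^k$ and then simply asserts that ``after evaluating \ldots\ we obtain the exact values,'' leaving the actual coefficient extraction to the reader. You have written out precisely that extraction, so your proof is the paper's argument made explicit.
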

 For $k=q$, the edgewise triangulation $T_{k,q}$ can be
recognized as the regular and unimodular triangulation of the
Newton polytope $\textrm{Newt}(s_\lambda(\mathbf{x}))$ for
$\lambda=(k)$, see \cite{Pay} and \cite{Bayer} for the details.
Our formula (\ref{E:hvector}) gives $h^*$-vector of
$\textrm{Newt}(s_\lambda(\mathbf{x}))$, as proven in Proposition
53 of \cite{Bayer}.



\end{document}